\numberwithin{equation}{section}
\numberwithin{figure}{section}
\theoremstyle{plain}
\newtheorem{thm}{Theorem}
  \theoremstyle{remark}
  \newtheorem{rem}[thm]{Remark}
  \theoremstyle{definition}
  \newtheorem{defn}[thm]{Definition}
  \theoremstyle{plain}
  \newtheorem{cor}[thm]{Corollary}
 \theoremstyle{definition}
  \newtheorem{example}[thm]{Example}
\newcommand{\N}{{\mathbb N}}
\newcommand{\Q}{{\mathbb Q}}
\newcommand{\R}{{\mathbb R}}
\newcommand{\LL}{{}^*\R_\text{f}}
\newcommand{\st}{{\rm st}}
\newcommand{\ER}{{^\bullet\R}}
\newcommand{\xyR}[1]{ \makeatletter
\xydef@\xymatrixrowsep@{#1} \makeatother}\makeatother
\newcommand{\ext}[1]{{}^*{#1}_{\text{\rm f}}}
\newcommand{\extF}[1]{{}^\bullet #1}
\newcommand{\stF}[1]{{^\circ #1}}
\newcommand{\diff}[1]{\,{\rm d}#1}
\begin{document}

\thispagestyle{empty}

\title[Does Cantor factor through infinitesimals?]{Two ways of
obtaining infinitesimals by refining Cantor's completion of the reals}

\author{Paolo Giordano}

\thanks{P. Giordano was supported by a L. Meitner FWF (Austria; grant
M1247-N13).}

\address{Department of Mathematics, University of Vienna, Austria.}

\email{paolo.giordano@univie.ac.at}

\author{Mikhail G. Katz}

\thanks{M.G. Katz was supported by the Israel Science Foundation
(grant no.~1294/06).}

\address{Department of Mathematics, Bar Ilan University, Ramat Gan,
52900 Israel.}

\email{katzmik@macs.biu.ac.il}
\begin{abstract}
Cantor's famous construction of the real continuum in terms of Cauchy
sequences of rationals proceeds by imposing a suitable equivalence
relation. More generally, the completion of a metric space starts from
an analogous equivalence relation among sequences of points of the
space. Can Cantor's relation among \emph{Cauchy} sequences of reals be
refined so as to produce a Cauchy complete and infinitesimal-enriched
continuum? We present two possibilities: one leads to invertible
infinitesimals and the hyperreals; the other to nilpotent
infinitesimals (e.g.  $h\ne0$ infinitesimal such that $h^{2}=0$) and
Fermat reals.  One of our themes is the trade-off between formal power
and intuition.
\end{abstract}

\keywords{Non-Archimedean continuum; Cauchy sequence; Cauchy completeness;
invertible infinitesimals; nilpotent infinitesimals}

\maketitle
\tableofcontents{}

\section{Introduction}

\label{one}

In a recent issue of \emph{The American Mathematical Monthly\/},
K.~Hrbacek \emph{et al} have argued that analysis needs better
axiomatics than Zermelo-Fraenkel set theory with the axiom of choice
(ZFC), see \cite{HLO}.  They propose a new axiomatic framework that
naturally includes numbers they call \emph{ultrasmall} (i.e.,
infinitesimal). They mention \cite[p.~803]{HLO} that these ideas are
not entirely new, and provide a list of references the earliest of
which is E. Nelson's 1977 text \cite{Ne}, where the author outlined
his enrichment of ZFC known as Internal Set Theory (IST).

While the axiomatic approach has much to recommend itself, we feel
that a prerequisite for new axiomatics is a good understanding of the
mathematical structure that stands to be axiomatized, and not vice
versa. To make a convincing case in favor of a new axiom system, one
first needs to explain the basics. In the case of
infinitesimal-enriched continua, the basics amount to understanding
the ultrapower construction.  One of our goals in this text is to give
an accessible explanation of the latter in the context of Cauchy
sequences, as well as providing possible alternatives.

Cantor's completion of the rationals resulting in the field of real
numbers proceeds by quotienting the space
$\mathcal{C}_\Q\subset\Q^{\N}$ of all Cauchy sequences of rational
numbers by Cauchy's equivalence relation. Similarly, the collection
$\mathcal{C}\subset\R^{\N}$ of all Cauchy sequences of real numbers
projects to the Archimedean continuum $\R$: \begin{equation}
\mathcal{C}\xrightarrow{\lim}\R.\label{C1}\end{equation} The
corresponding equivalence relation $\sim_{\mathcal C}$ defined by
\[
u\sim_{\mathcal{C}}v\text{\;\; \;if and only
if\;\;}\lim_{n\to\infty}\left|u_{n}-v_{n}\right|=0,
\] 
``collapses'' all null sequences to a single point $0\in\R$.  Is there
another way to define an equivalence relation $\sim$ on the
space~$\mathcal{C}$ that would allow some null sequences to retain
their distinct identity? In other words, can one {\em refine\/}
Cantor's equivalence relation among Cauchy sequences, in such a way as
not to {}``collapse'' all null sequences to zero? The idea would be
that, relative to a new equivalence relation $\sim$, a null sequence
of reals would become an actual infinitesimal. In other words, we are
searching for a new notion of {}``completion'', with respect to which
the real field~$\R$ can be completed by the addition of
infinitesimals. What one seeks is an intermediate stage,
$\LL:=\mathcal{C}/\sim$, in the projection~\eqref{C1}.  The subscript
{}``f'' in the symbol $\LL$ stands for {}``finite'' (i.e., there are
no infinite numbers).  Such an intermediate stage $\LL$ would
represent an infinitesimal-enriched continuum as in
Figure~\ref{helpful}.

\begin{figure}
\[
\xyR{1.2cm}\xymatrix{ & &
^{*}\R_{\text{f}}\ar@<0.5ex>[d]^{\text{st}}\\
\mathcal{C}\;\ar[rr]_{\lim}\ar[urr]^{p} & & \R\ar@{^{(}->}@<0.5ex>[u]}
\]
\caption{\textsf{Factoring Cantor's map $\mathcal{C}\to\R$} through
an intermediate stage $^{*}\R_{\text{f}}$.}
\label{helpful} 
\end{figure}

Here, if $[u]_{\sim}$ is the new equivalence class of a sequence $u$,
then the function \[ \text{st}:\LL\to\R,\] defined by \[
\text{st}([u]_{\sim}):=\lim_{n\to+\infty}u_{n} \; \in\R\] is the usual
limit of a Cauchy sequence
$u=\left(u_{n}\right)_{n\in\N}\in\mathcal{C}$.  This function
represents the \emph{standard part} of $[u]_{\sim}\in\LL$, that is a
standard real number infinitely close to the new number
$[u]_{\sim}\in\LL$. The most natural way to obtain a ring structure on
$\LL$ is to define the equivalence relation $\sim$ so that it
preserves pointwise sums and products. Therefore, we expect $\LL$ to
be a ring rather than a field, because it cannot contain the pointwise
inverse $\left(\frac{1}{u_{n}}\right)_{n\in\N}$ of an infinitesimal
$[u_{n}]$, since the inverse is not a Cauchy sequence. 

In this text, we will explore
two possible implementations of these ideas.

\section{A possible approach with invertible infinitesimals}

To implement the ideas outlined in Section~\ref{one}, a possible
approach is to declare two Cauchy sequences $u$, $v\in\mathcal{C}$ to
be equivalent if they coincide on a {}``dominant'' set of indices in
$\N$: \begin{equation} u\sim v\quad\iff\quad\left\{ n\in\N\,|\,
u_{n}=v_{n}\right\} \text{ is
dominant.}\label{eq:idea_dominant}\end{equation} For simplicity, we
will use the symbol $[u]$ for the equivalence class $[u]_{\sim}$
generated by $u\in\mathcal{C}$.

What is {}``dominant''? A finite set in $\N$ is never dominant; every
cofinite set (i.e., set with finite complement) is necessarily
dominant, and we also expect the property that the superset of a
dominant set is dominant, as well. Moreover, we expect the relation
\eqref{eq:idea_dominant} to yield an equivalence relation.  In
particular, the validity of the transitive property for generic Cauchy
sequences implies that the intersection of two dominant sets is
dominant. In fact, let us assume that 
\begin{equation} 
\forall u,v,w\in\mathcal{C}: \quad u\sim v\ \wedge\ v\sim
w\;\Rightarrow\; u\sim w.\label{eq:transitiveByHyp}
\end{equation}
Then, if sets $A$ and $B$ of indices are dominant, it suffices to
take%
\footnote{Let us note that in $\N$ we have $0\in\N$.}
\[
u_{n}:=\begin{cases} 1 & \text{if }n\in A\\ 1-\frac{1}{n+1} & \text{if
}n\in\N\setminus A\end{cases}\qquad w_{n}:=\begin{cases} 1 & \text{if
}n\in B\\ 1+\frac{1}{n+1} & \text{if }n\in\N\setminus B\end{cases}\]
to obtain $u\sim1$ and $1\sim w$, so that $u\sim w$
from~\eqref{eq:transitiveByHyp}.  It follows that the set $\left\{
n\in\N\,|\, u_{n}=w_{n}\right\} =A\cap B$ is dominant.  Conversely, if
our family of dominant sets is closed with respect to finite
intersections, then the relation $\sim$ is an equivalence
relation. For example, the family of all cofinite sets
\[ 
\mathcal{F}:=\left\{ S\subseteq\N \;|\; \N\setminus S\text{ is
finite}\right\} ,\] the so-called Fr\'echet filter, satisfies all the
conditions we have imposed so far on dominant sets.  These conditions
define the notion of a \emph{filter} on the set $\N$ (extending the
Fr\'echet filter).

It is easy to prove that the equivalence relation $\sim$ preserves
pointwise operations 
\begin{equation}
[u]+[v]:= \left[\left(u_{n}+v_{n}\right)_{n\in\N}\right]\quad
\text{and} \quad [u]\cdot[v]:=\left[\left(u_{n}\cdot
v_{n}\right)_{n\in\N}\right]
\label{eq:pointwiseRingOpearations}
\end{equation}
so that the quotient 
\begin{equation}
\label{24}
\LL:=\mathcal{C}/\!\!\!\sim
\end{equation}
becomes a ring.  Moreover, the real numbers $\R$ are embedded in $\LL$
as constant sequences.  Whether or not $\LL$ is an integral domain
depends on the choice of the filter of dominant sets. Thus, the
product of sequences $u$ and~$w$ given by \[ u_{n}:=\begin{cases} 0 &
\text{if }n\text{ is even}\\ \frac{1}{n+1} & \text{if }n\text{ is
odd}\end{cases}\qquad w_{n}:=\begin{cases} \frac{1}{n+1} & \text{if
}n\text{ is even}\\ 0 & \text{if }n\text{ is odd}\end{cases}\] is
zero, but whether or not $[u]$ is zero depends on whether the set of
even numbers is considered to be dominant or not. We will solve this
problem later.

To show that the relation as in \eqref{eq:idea_dominant} is a
refinement of the usual Cauchy relation~$\sim_{\mathcal{C}}$, assume
that $u$, $v\in\mathcal{C}$ coincide on a dominant set~$A$.  Then we
have~$u_{\sigma_{n}}-v_{\sigma_{n}}=0$ for some subsequence
$\sigma:\N\rightarrow\N$ (enumerating the members of the set $A$). It
follows that $u\sim_{\mathcal{C}}v$ since $u$ and $v$ converge. Of
course, the relation $\sim$ is a strict refinement because if we take
$u_{n}=\frac{1}{n}$ and $v_{n}=0$, then $u\sim_{\mathcal{C}}v$ but
$\left\{ n\in\N\,|\, u_{n}=v_{n}\right\} =\emptyset$ is the empty set,
which is never dominant.

Whether the idea expressed by the notion of a dominant set as
in~\eqref{eq:idea_dominant} can be considered {}``natural'' or not is
a matter of opinion. An alternative approach would be to define a new
equivalence relation in terms of the rate of convergence of the
difference $u-v$. A thread going in this direction will be presented
in Section~\ref{sub:nilpInf}, but here we will continue with the
approach based on \eqref{eq:idea_dominant}.  If one accepts this idea,
then it is also natural to define an \emph{order}, by setting
\begin{equation} [u]\ge[v]\quad\iff\quad\left\{ n\in\N\,|\, u_{n}\ge
v_{n}\right\} \text{ is dominant}.\label{eq:order}\end{equation} This
yields an ordered ring, as one can easily check.

Is this order total? The assumption that it is total,
i.e. 
\begin{equation} \forall u\in\mathcal{C}:\
[u]\ge0\quad\text{or}\quad[u]\le0,\label{eq:totalOrder}
\end{equation}
yields a further condition on dominant sets. In fact, if $A$ is
dominant, then defining
\[ 
u_{n}:=\begin{cases} \frac{1}{n+1} & \text{if }n\in A\\ -\frac{1}{n+1}
& \text{if }n\in\N\setminus A\end{cases}
\] 
we have that $A$ is dominant if the first alternative of
\eqref{eq:totalOrder} holds; otherwise $\N\setminus A$ is dominant. A
filter satisfying this additional condition is called a \emph{free
ultrafilter}.%
\footnote{For the sake of completeness, we say that
$\mathcal{U}\subseteq\mathcal{P}(I)$ is an ultrafilter on the set $I$
if $\emptyset\notin\mathcal{U}$; $\mathcal{U}$ is closed with respect
to finite intersections and supersets; and $X\in\mathcal{U}$ or
$I\setminus X\in\mathcal{U}$ for every subset $X$ of $I$.}
Using this additional condition, we are now also able to prove that
$\LL$ is an integral domain.
\begin{thm}
$\LL$ is an integral domain. \end{thm}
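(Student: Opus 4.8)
The plan is to reduce the claim to one combinatorial fact about free ultrafilters --- their \emph{primeness}: if a union of finitely many sets lies in the ultrafilter, then one of them already does. Granting this, the argument needs no auxiliary Cauchy sequences (in contrast with the transitivity and totality discussions above); one only inspects index sets attached to the \emph{given} sequences. First I would unwind the definitions. By the pointwise ring operations \eqref{eq:pointwiseRingOpearations}, and since $\R$ sits inside $\LL$ as constant sequences so that $0\in\LL$ is the class of the zero sequence, an element $[u]$ (with $u\in\mathcal{C}$) equals $0$ in $\LL$ exactly when $\{n\in\N\mid u_{n}=0\}$ is dominant, while $[u]\cdot[v]=0$ exactly when $A:=\{n\in\N\mid u_{n}v_{n}=0\}$ is dominant --- this makes sense because the pointwise product of two Cauchy sequences of reals is again Cauchy. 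Since a product of two reals vanishes only if a factor does, $A=B\cup C$ where $B:=\{n\in A\mid u_{n}=0\}$ and $C:=\{n\in A\mid v_{n}=0\}$.

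The core step is primeness of the free ultrafilter $\mathcal{U}$ of dominant sets: if $B\cup C\in\mathcal{U}$ then $B\in\mathcal{U}$ or $C\in\mathcal{U}$. If neither held, the ultrafilter dichotomy would put $\N\setminus B$ and $\N\setminus C$ both in $\mathcal{U}$; closure under finite intersections would then give $\N\setminus(B\cup C)\in\mathcal{U}$, and intersecting with $B\cup C\in\mathcal{U}$ would yield $\emptyset\in\mathcal{U}$, impossible for a filter. Applying this to $A=B\cup C$, one of $B$, $C$ is dominant; by closure under supersets, either $\{n\mid u_{n}=0\}\supseteq B$ is dominant, whence $[u]=0$, or $\{n\mid v_{n}=0\}\supseteq C$ is dominant, whence $[v]=0$. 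Finally $\LL$ is nontrivial, since $[1]=[0]$ would force the empty set $\{n\in\N\mid 1=0\}$ to be dominant; hence $\LL$ is an integral domain.

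I do not expect a real obstacle here: $\LL$ is already known to be a commutative ring, and the entire mathematical content is the primeness lemma, whose proof is the short dichotomy argument just sketched. The only point deserving a moment's care is checking that $(u_{n}v_{n})_{n\in\N}\in\mathcal{C}$, so that $[u]\cdot[v]$ is a bona fide element of $\LL$ --- this is immediate from the boundedness of Cauchy sequences.
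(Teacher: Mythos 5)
Your proof is correct and rests on the same underlying facts as the paper's: the ultrafilter dichotomy together with closure under finite intersections (and the emptiness/properness of the filter). The paper simply argues the contrapositive in two lines---if $[u]\neq 0$ and $[v]\neq 0$ then the sets $\{n\mid u_{n}\neq 0\}$ and $\{n\mid v_{n}\neq 0\}$ are dominant, hence so is their intersection, so $[u][v]\neq 0$---which is exactly your primeness lemma read backwards, so yours is the same approach, just packaged in the direct direction with the lemma made explicit.
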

\begin{proof}
Given nonzero classes $[u]\not=0$ and $[v]\not=0$, both of the sets
$\{n\in\N\,|\, u_{n}\not=0\}$ and $\{n\in\N\,|\, v_{n}\not=0\}$
are dominant. Therefore so is their intersection. 
\end{proof}
Given an integral domain, we can consider the corresponding field of
fractions ${}^{*}\R_{\text{frac}}$. Since $\LL$ is also an ordered
ring, the order structure extends to the quotient field of fractions
in the usual way.
\begin{rem}
In a classical approach to nonstandard analysis, the equality on a
dominant set (Formula \eqref{eq:idea_dominant}) is applied to
\emph{arbitrary} sequences, rather than merely Cauchy sequences.
Nonetheless, our field of fractions ${}^{*}\R_{\text{frac}}$ is
isomorphic to the full hyperreal field%
\footnote{Namely, the field obtained as the quotient of $\R^\N$ using
the same ultrafilter; in general, the result will depend on the
ultrafilter.}
${}^{*}\R$ of nonstandard analysis through
\[
\frac{[u]}{[v]}\in{}^{*}\R_{\text{frac}}\mapsto\left[\left(\frac{u_{n}}{v_{n}}\right)_{n\in\N}\right]_{\mathcal{U}}\in{}^{*}\R,
\]
where $[(q_{n})_{n}]_{\mathcal{U}}$ is the equivalence class modulo
the ultrafilter $\mathcal{U}$. To prove this, note that every sequence
$q\in\R^{\N}$ can be written as $q=\frac{u}{v}$ for two null sequences
$u$, $v$.  Thus, we can
set~$u_{n}:=q_{n}\cdot\frac{1}{e^{|q_{n}|}\cdot(n+1)}$ and
$v_{n}:=\frac{1}{e^{|q_{n}|}\cdot(n+1)}$.
\end{rem}

\section{A free ultrafilter, anyone?}

Our intuition yearns for meaningful examples of free ultrafilters.
Such can be obtained by using Zorn's lemma. It is possible to prove
that some weaker form of the axiom of choice is necessary to prove the
existence of a free ultrafilter.  The use of this axiom in modern
mathematics is routine.  Thus, one of its standard consequences is the
Hahn-Banach theorem, of fundamental importance in functional
analysis.%
\footnote{Luxemburg \cite{Lu69} explored possibilities of constructing extensions
that rely on Hahn-Banach only (rather than full choice).%
} Yet, one consequence of exploiting this axiom is that we don't possess
detailed information about how free ultrafilters are made. Moreover,
this also implies that it is not so easy to prove the existence of
a free ultrafilter satisfying some given and potentially useful conditions. 
\begin{thm}
The Fr\'echet filter can be extended to a free ultrafilter. \end{thm}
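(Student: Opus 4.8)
The plan is to invoke Zorn's lemma on the poset $\mathcal{P}$ of all \emph{proper} filters on $\N$ that contain the Fr\'echet filter $\mathcal{F}$, ordered by inclusion. First I would note that $\mathcal{P}$ is nonempty, since $\mathcal{F}$ itself is one of its members. Next, given a chain $\{\mathcal{G}_{i}\}_{i\in I}$ in $\mathcal{P}$, I would verify that the union $\mathcal{G}:=\bigcup_{i\in I}\mathcal{G}_{i}$ is again a proper filter containing $\mathcal{F}$: it is closed under supersets because each $\mathcal{G}_{i}$ is; it omits $\emptyset$ because no $\mathcal{G}_{i}$ contains $\emptyset$; and it is closed under finite intersections because any two sets $A,B\in\mathcal{G}$ lie in some $\mathcal{G}_{i}$ and $\mathcal{G}_{j}$ respectively, and since we have a chain one of them, say $\mathcal{G}_{j}$, contains the other, so $A,B\in\mathcal{G}_{j}$ and hence $A\cap B\in\mathcal{G}_{j}\subseteq\mathcal{G}$. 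Thus every chain has an upper bound in $\mathcal{P}$, and Zorn's lemma produces a maximal element $\mathcal{U}\in\mathcal{P}$.

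It then remains to show that a maximal such filter $\mathcal{U}$ is a free ultrafilter. For the ultrafilter property, suppose toward a contradiction that there is $X\subseteq\N$ with $X\notin\mathcal{U}$ and $\N\setminus X\notin\mathcal{U}$. I would first observe that $X$ meets every member of $\mathcal{U}$: if $A\cap X=\emptyset$ for some $A\in\mathcal{U}$, then $A\subseteq\N\setminus X$, and closure under supersets would force $\N\setminus X\in\mathcal{U}$, a contradiction. Consequently the family $\mathcal{U}':=\{S\subseteq\N\mid S\supseteq A\cap X\text{ for some }A\in\mathcal{U}\}$ consists of nonempty sets, and one checks directly that it is a filter containing both $\mathcal{U}$ and $X$. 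Since $X\in\mathcal{U}'\setminus\mathcal{U}$, this strictly enlarges $\mathcal{U}$ within $\mathcal{P}$, contradicting maximality; hence $\mathcal{U}$ is an ultrafilter. Finally, $\mathcal{U}$ is free: it contains $\mathcal{F}$, so every cofinite set is in $\mathcal{U}$, whence no finite set is in $\mathcal{U}$ (a finite set and its cofinite complement cannot both belong to a proper filter), and in particular no singleton $\{n\}$ lies in $\mathcal{U}$ — which is exactly the non-principality required by the definition of a free ultrafilter.

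The only genuinely delicate step is the verification in the second paragraph that adjoining a set $X$ with $\N\setminus X\notin\mathcal{U}$ still yields a \emph{proper} filter; this hinges entirely on the observation that $X$ has nonempty intersection with each $A\in\mathcal{U}$, which uses nothing more than closure under supersets. Everything else — nonemptiness of $\mathcal{P}$, the chain condition, and freeness — is essentially bookkeeping. It is worth noting, as the surrounding discussion already indicates, that the appeal to Zorn's lemma is not an artifact of this argument: the ultrafilter lemma is strictly weaker than the full axiom of choice but is known not to be provable in ZF alone, so some fragment of choice is unavoidable here.
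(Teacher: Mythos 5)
Your proof is correct. Note, however, that the paper does not actually supply an argument for this theorem: its ``proof'' consists of the single line ``See Tarski (1930),'' deferring entirely to the original reference, while the surrounding text only remarks informally that Zorn's lemma (or some weaker fragment of choice) is what makes such ultrafilters available. What you have written is the standard self-contained Zorn's lemma argument that the cited source encapsulates: the poset of proper filters extending the Fr\'echet filter, the union-of-a-chain verification, maximality forcing the dichotomy $X\in\mathcal{U}$ or $\N\setminus X\in\mathcal{U}$ via the enlargement $\mathcal{U}'=\{S\mid S\supseteq A\cap X \text{ for some } A\in\mathcal{U}\}$, and freeness from the presence of all cofinite sets. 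All the steps check out, including the one you correctly flag as delicate (that $X$ meets every member of $\mathcal{U}$, so the enlarged filter is proper), and the closure of $\mathcal{U}'$ under finite intersections follows since $(A\cap X)\cap(B\cap X)\supseteq(A\cap B)\cap X$ with $A\cap B\in\mathcal{U}$. So the trade-off is simply that the paper outsources the proof to the literature, whereas your version makes explicit exactly where the axiom of choice enters --- which is in the spirit of the paper's own discussion in that section --- and your closing remark on the strength of the ultrafilter lemma relative to ZF and full choice is accurate.
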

\begin{proof}
See Tarski \cite{Tar} (1930). 
\end{proof}
We have to admit that it is not so easy to judge the idea represented
by formula~\eqref{eq:idea_dominant}.  Indeed, starting from our
definition of pointwise operations and order, one can easily guess
that this idea is formally very powerful. For example, it is almost
trivial to extend to $\LL$ the validity of general laws about real
numbers, such as the following law: 
\begin{equation}
\forall x,y\in\R:
\sin(x+y)=\sin(x)\cos(y)+\cos(x)\sin(y).\label{eq:sineSum}
\end{equation}
In fact, we can extend trigonometric functions pointwise. Namely, we
extend $\sin:\R\rightarrow\R$ to ${}^{*}\!\sin:\LL\rightarrow\LL$ by
setting \[
{}^{*}\!\sin\left(\left[u\right]\right):=\left[\left(\sin(u_{n})\right)_{n\in\N}\right].\]
Finally, the law \eqref{eq:sineSum} extends to ${}^{*}\!\sin$ and
${}^{*}\!\cos$ because the set of indices $n\in\N$ where it is true is
all of $\N$. We will deal with this extension of laws from $\R$ to
$\LL$ in more general terms in Section~\ref{sec:Leibniz's-law}.

On the other hand, whatever will be the example of ultrafilter we will
be able to present, it doesn't seems sufficiently meaningful why the
infinitesimal
$\left[\left(\frac{(-1)^{n}}{n+1}\right)_{n\in\N}\right]$ should be
considered positive rather than negative, or vice versa.%
\footnote{Moreover, examining the conditions defining the notion of
ultrafilter, one can guess that the notion of a dominant set is not
intuitively so clear. In point of fact, the technically desirable
conditions about the closure with respect to intersection and
complement can lead to counter intuitive consequences. We would have
that even numbers $P_{2}$ or odd numbers will be dominant (but not
both). Let us suppose, e.g., the first case and continue: even numbers
in $P_{2}$, i.e. the set $P_{4}$ of multiples of 4, or its complement
$\N\setminus P_{4}$ will be dominant. In the latter case, also
$P_{2}\cap(\N\setminus P_{4})$, i.e. numbers of the form $2(2n+1)$,
will be dominant. In any case we would be able to find always a
dominant set which has {}``1/2 of the elements of the previous
dominant set''. Continuing in this way, we can obtain a dominant set,
which is intuitively very {}``thin'' with respect to its
complement. To understand this idea a little better, let us consider
that everything we said up to now can be generalized if instead of
sequences $u:\N\rightarrow\R$ we take functions
$u:[0,1]\rightarrow\R$.  In other words, instead of taking our indices
as integer numbers, we take real numbers in $[0,1].$ Then, we can
repeat the previous reasoning considering, at each step $k$,
subintervals of length $2^{-k}$.  Therefore, for every
$\varepsilon>0$, we are always able to find in an ultrafilter on
$[0,1]$ a dominant set $A$ whose uniform probability $P(A)<\epsilon$,
whereas $P([0,1]\setminus A)>1-\epsilon$, even though this complement
is not dominant. See \cite{Gio09} for a formalization of this idea
using the notion of density of subsets of $\N$.}

To summarize, the idea of requiring sequences to coincide on dominant
sets, even if it may seem initially forbidding from an intuitive point
of view, appears to be formally extremely powerful.  As an
alternative, in Section~\ref{sec:nilpotentInf} we will present another
idea, which is intuitively clear but which doesn't seem equally
powerful.  Which thread one wishes to follow would depend on
applications envisioned.

\section{Actual infinitesimals, null sequences, and standard part}

What are, formally, the infinitesimals in the ring $\LL$
of~\eqref{24}, and how are they related to null sequences? An
infinitesimal is a number in $\LL$ which belongs to every interval of
the form $\left[\frac{-1}{n},\frac{1}{n}\right]$:
\begin{defn}
We say that $x\in\LL$ is \emph{infinitesimal} if and only if \[
\forall n\in\N_{\ne 0}:\ -\frac{1}{n}<x<\frac{1}{n},\] and we will
write $x\approx0$. Similarly, we write $y\approx z$ if
$y-z\approx0$. Clearly, such an $x\in\LL$ will be infinitesimal if and
only if in the field of fractions ${}^{*}\R_{\text{frac}}$, the
element~$x^{-1}$ is infinite,%
\footnote{Note that without additional assumptions,
${}^{*}\!\R_{\text{frac}}$ may contain additional more infinitesimals
not found in~$\LL$; see Remark~\ref{rem:P-poinInNSA} below.}
i.e. it doesn't satisfy a bound of the form $|x|<n$ for some $n\in\N$.
\end{defn}
Do infinitesimals in $\LL$ correspond to ordinary null sequences? 
\begin{thm}
\label{thm:infinitesimalsAndNullSequences}Let $[u]\in\LL$, then
we have that\[
[u]\text{ is infinitesimal}\]
 if and only if\[
\lim_{n}u_{n}=0\]
 \end{thm}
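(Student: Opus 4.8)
The plan is to prove both directions by translating the order-theoretic definition of ``infinitesimal'' into a statement about the dominant sets on which certain coincidences or inequalities hold, and then relating this to ordinary convergence of a Cauchy sequence.

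First I would handle the easy direction: assume $\lim_n u_n = 0$ and show $[u]$ is infinitesimal. Fix $k\in\N_{\ne 0}$. Since $u_n\to 0$, the set $\{n\in\N : -\tfrac1k < u_n < \tfrac1k\}$ is cofinite, hence dominant (the ultrafilter extends the Fr\'echet filter). By the definition of the order \eqref{eq:order}, this gives $-\tfrac1k \le [u] \le \tfrac1k$; to get strict inequalities one notes that the sequences $(\tfrac1k)_n - u$ and $u - (-\tfrac1k)_n$ are not null, so their classes are nonzero, and combined with $\ge 0$ this forces $>0$ (here I am using that $\LL$ is an integral domain, so a nonzero element that is $\ge 0$ is in fact $>0$ in the ordered ring, or more directly that strict inequality holds on a dominant set since even $<$ holds cofinitely). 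Since $k$ was arbitrary, $[u]\approx 0$.

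For the converse, assume $[u]$ is infinitesimal but $\lim_n u_n = \ell \ne 0$; I want a contradiction. Suppose $\ell > 0$ (the case $\ell < 0$ is symmetric). Choose $k\in\N_{\ne 0}$ with $\tfrac1k < \ell$. Because $u_n \to \ell$, all but finitely many $u_n$ satisfy $u_n > \tfrac1k$, so the set $\{n : u_n > \tfrac1k\}$ is cofinite and hence dominant; by \eqref{eq:order} this says $[u] \ge \tfrac1k$, in fact $[u] > \tfrac1k$, contradicting $-\tfrac1k < [u] < \tfrac1k$. Thus $\ell = 0$.

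The argument is genuinely short; the only subtlety — and the step I would be most careful about — is the passage from the non-strict inequalities that \eqref{eq:order} literally delivers to the strict inequalities demanded by the definition of infinitesimal, and conversely extracting a strict inequality to contradict. The clean way is to observe that in all the relevant cases the inequality holds not just on a dominant set but on a \emph{cofinite} set, and that for a sequence $w$ with $\{n : w_n > 0\}$ cofinite one has $[w] \ne 0$ (the complement, being finite, is not dominant, so $\{n : w_n \ne 0\}$ is dominant) together with $[w]\ge 0$, whence $[w] > 0$ in the ordered ring $\LL$. I would state this small observation once and invoke it in both directions rather than repeating it.
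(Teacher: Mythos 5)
Your proof is correct, but the nontrivial direction takes a genuinely different route from the paper's. The paper proves that an infinitesimal $[u]$ satisfies $\lim_n u_n=0$ directly: for each $n$ the set $A_n=\{k\in\N : -\tfrac1n<u_k<\tfrac1n\}$ is dominant, hence infinite, so one extracts an increasing sequence of indices $k_n\in A_n$ with $u_{k_n}\to 0$, and the Cauchy property of $u$ upgrades this subsequential convergence to convergence of the whole sequence. You instead argue by contradiction on the value of the limit: Cauchy-ness guarantees that $\ell=\lim_n u_n$ exists, and if $\ell\ne 0$ then $u_n>\tfrac1k$ (say) on a cofinite, hence dominant, set, so $[u]\ge\tfrac1k$ by \eqref{eq:order}, contradicting $-\tfrac1k<[u]<\tfrac1k$ via antisymmetry of the order. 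Both arguments use that $u$ is Cauchy, but in different places: the paper uses it to pass from a subsequence to the full sequence, you use it only to know the limit exists; in exchange your version avoids the subsequence extraction and needs nothing beyond the facts that cofinite sets are dominant and finite sets are not. Your easy direction is essentially the paper's, except that you make explicit the passage from the non-strict inequalities that \eqref{eq:order} literally provides to the strict inequalities in the definition of infinitesimal, via the observation that a class represented by a sequence positive on a cofinite set is nonzero and hence $>0$; the paper leaves this implicit, and your final formulation of that observation is the right one to use (the earlier phrase ``not null, so nonzero'' should be replaced by it, since ``$[w]=0$ implies $w$ null'' is part of what the theorem is establishing).
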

\begin{proof}
\noindent Let us assume that $[u]$ is infinitesimal, then for each
$n\in\N_{\ne 0}$, the set \[ A_{n}:=\left\{
k\in\N\,:\,-\frac{1}{n}<u_{k}<\frac{1}{n}\right\} \] is
dominant. Therefore, it is infinite and we can always find an
increasing sequence $k:\N\rightarrow\N$ such that $k_{n}\in A_{n}$ and
$k_{n+1}>k_{n}$.  For such a sequence we have\[ \forall
n\in\N_{\ne0}:\ -\frac{1}{n}<u_{k_{n}}<\frac{1}{n}.\] Since
$u\in\mathcal{C}$ is a Cauchy sequence, we obtain
\[
\lim_{n\to+\infty}u_{n}=\lim_{n\to+\infty}u_{k_{n}}=0.
\] 
To prove the converse implication, we can consider that\[ \forall
n\in\N_{\ne0}\,\exists N:\ \forall k\in\N_{\ge
N}:\-\frac{1}{n}<u_{k}<\frac{1}{n}.\] Since every cofinite set
$\N_{\ge N}$ is dominant, this proves that $[u]$ is infinitesimal.
\end{proof}
\noindent 
As a corollary, we have that $[u]\approx[v]$ if and only if
$\lim_{n}u_{n}=\lim_{n}v_{n}$. This allows us to define the standard
part mentioned above.
\begin{defn}
Let $[u]\in\LL$, then the real number \begin{equation}
\text{st}([u]):=
\lim_{n\to+\infty}u_{n}\in\R\label{eq:DefStandardPart}\end{equation}
is called the \emph{standard part} of $[u]$.
\end{defn}
\noindent
Note that we have~$x\approx\text{st}(x)$ for every $x\in\LL$.

Is our extension $\LL$ of $\R$ still Cauchy complete with respect to
some kind of metric extending the usual Euclidean metric on $\R$?  It
is not hard to prove that \[
d\left(x,y\right):=\left|\text{st}(x)-\text{st}(y)\right|\in\R\quad\forall
x,y\in\LL\] defines a pseudo-metric having the desired
properties. Note that $\LL$ is not Dedekind complete, since the set of
all the infinitesimals is bounded but does not admit a least upper
bound.
\begin{rem}
\label{rem:P-poinInNSA} In the quotient field ${}^{*}\R_{\text{frac}}$,
the assertion of Theorem \ref{thm:infinitesimalsAndNullSequences} is
not generally true, unless one considers a particular type of
ultrafilter, called a P-point.  While the existence of a free
ultrafilter can be proved using Zorn's lemma, which is equivalent to
the axiom of choice, the existence of a P-point cannot be proved in
ZFC, that is using the usual axioms of set theory plus the axiom of
choice. Assuming the continuum hypothesis or Martin's axiom and using
transfinite induction, it is possible to prove the existence of a
P-point. See \cite{CKKR} and references therein for more details about
this foundational wrinkle.%
\footnote{The previous Theorem
\ref{thm:infinitesimalsAndNullSequences} can be easily extended to
${}^{*}\R_{\text{frac}}$ if we consider fractions $\frac{[u]}{[v]}$
for which the limit $\lim_{n\to+\infty}\frac{u_{n}}{v_{n}}$ exists
finite. It results that $\frac{[u]}{[v]}$ is infinitesimal in
${}^{*}\R_{\text{frac}}$ if and only the limit of this fraction is
zero. This permits to extend the definition of the standard part
function to all the fractions which are of the form $\frac{0}{0}$ but
whose limit exists finite. Finally, because of the previous Remark
\ref{rem:P-poinInNSA} and of the isomorphism
${}^{*}\R_{\text{frac}}\simeq{}^{*}\R$, we can always find an
infinitesimal $\frac{[u]}{[v]}\in{}^{*}\R_{\text{frac}}$ which is not
generated by an infinitesimal sequence
$\left(\frac{u_{n}}{v_{n}}\right)_{n}$; of course, this fraction is of
the form $\frac{0}{0}$ but the corresponding ratio of sequences
doesn't converge.}
\end{rem}

\section{\label{sec:Leibniz's-law}Leibniz's law of continuity in $\LL$}

To convey the full power of the idea \eqref{eq:idea_dominant}, we have
to go back to Leibniz.  Leibniz introduced infinitesimal and infinite
quantities, and developed a heuristic principle called the {}``law of
continuity'', which had roots in the work of earlier scholars such as
Nicholas of Cusa and Johannes Kepler.  It is the principle that:
\begin{quotation}
What succeeds for the finite numbers succeeds also for the infinite
numbers 
\end{quotation}
\noindent (see Knobloch \cite[p.~67]{Kn}, Robinson \cite[p.~266]{Ro66},
and Laugwitz \cite{Lau92}).

Kepler had already used it to calculate the area of the circle by
representing the latter as an infinite-sided polygon with infinitesimal
sides, and summing the areas of infinitely many triangles with infinitesimal
bases. Leibniz used the law to extend concepts such as arithmetic
operations, from ordinary numbers to infinitesimals, laying the groundwork
for infinitesimal calculus.

Of course, a modern mathematical version of this heuristic law depends
on our formalization of the first word {}`what' in the law of
continuity as stated above. We have already seen that this is almost
trivial if what we mean by the Leibnizian {}`what' is {}``continuous
equalities between real numbers''.  In fact we can extend arbitrary
continuous functions as follows.  Recall that ${\mathcal{C}}$ is the
space of Cauchy sequences of real numbers.
\begin{defn}
\label{def:extensionOfContinuousFunctions} Let $f:\R^{d}\rightarrow\R$
be a continuous function. Then we have
$f\circ(u^{1},\dots,u^{d})\in\mathcal{C}$ for every $d$-tuple of
Cauchy sequences $u^{1},\dots,u^{d}\in\mathcal{C}$, and we can define
the extension ${}^{*}\! f$ by setting \[ {}^{*}\!
f\left([u^{1}],\dots,[u^{d}]\right):=
\left[\left(f(u_{n}^{1},\dots,u_{n}^{d})\right)_{n\in\N}\right]_{\sim}\quad\forall[u^{1}],\dots,[u^{d}]\in\LL.
\]
\end{defn}
This gives a true extension of $f$,
i.e. ${}^{*}\!f(r_{1},\dots,r_{d})=f(r_{1},\dots,r_{d})$ for every
$r_{1},\dots,r_{d}\in\R$ (identified with the corresponding constant
sequences).
\begin{thm}
\label{thm:elementaryTransfer} Let $f$, $g:\R^{d}\rightarrow\R$
be continuous functions, then the equality
\begin{equation}
\forall x_{1},\dots,x_{d}\in\R:\
f(x_{1},\dots,x_{d})=g(x_{1},\dots,x_{d})
\label{eq:1ElementaryTransfer}
\end{equation}
is satisfied if and only if 
\begin{equation}
\forall\alpha_{1},\dots,\alpha_{d}\in\LL:\
{}^{*}\!f(\alpha_{1},\dots,\alpha_{d})=
{}^{*}\!g(\alpha_{1},\dots,\alpha_{d})
\label{eq:2ElementaryTransfer}
\end{equation}
Analogously, we can formulate the transfer of inequalities of the form
$f(x_{1},\dots,x_{d})<g(x_{1},\dots,x_{d})$.
\end{thm}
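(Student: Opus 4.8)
The plan is to read off both equivalences directly from the pointwise formula in Definition~\ref{def:extensionOfContinuousFunctions}, the only tool being the observation that the whole index set $\N$ is always a dominant set.

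First I would dispose of the easy implications. Since ${}^*\!f$ and ${}^*\!g$ genuinely extend $f$ and $g$ --- evaluated on constant sequences they return $f$ and $g$, as noted right after Definition~\ref{def:extensionOfContinuousFunctions} --- substituting $\alpha_i:=[(r_i)_{n\in\N}]$ for arbitrary $r_1,\dots,r_d\in\R$ into \eqref{eq:2ElementaryTransfer} immediately yields $f(r_1,\dots,r_d)=g(r_1,\dots,r_d)$, i.e.\ \eqref{eq:1ElementaryTransfer}. The same substitution transfers a strict inequality ${}^*\!f(\alpha)<{}^*\!g(\alpha)$ on $\LL$ down to $f<g$ on $\R^{d}$, because the order of $\LL$ restricted to the embedded copy of $\R$ is the usual order of $\R$.

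For the forward direction I would fix $\alpha_i=[u^i]$ with $u^i\in\mathcal{C}$. Definition~\ref{def:extensionOfContinuousFunctions} already supplies what is needed: $f\circ(u^1,\dots,u^d)$ and $g\circ(u^1,\dots,u^d)$ lie in $\mathcal{C}$ (here continuity on all of $\R^{d}$ together with completeness of $\R^{d}$ is used --- a Cauchy sequence in $\R^{d}$ converges, and a continuous map carries convergent sequences to convergent ones), and the resulting classes are independent of the chosen representatives (if $u^i$ and $v^i$ agree on dominant sets $D_i$, then $f(u^1_n,\dots,u^d_n)=f(v^1_n,\dots,v^d_n)$ for every $n\in\bigcap_i D_i$, and this intersection is dominant by closure under finite intersections). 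Now assume \eqref{eq:1ElementaryTransfer}. Then for \emph{every} index $n$ we have $f(u^1_n,\dots,u^d_n)=g(u^1_n,\dots,u^d_n)$, so the sequences $\left(f(u^1_n,\dots,u^d_n)\right)_{n}$ and $\left(g(u^1_n,\dots,u^d_n)\right)_{n}$ agree on all of $\N$, a dominant set; hence their $\sim$-classes are equal, which is exactly \eqref{eq:2ElementaryTransfer}.

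For the inequality transfer in the forward direction I would assume $f(x_1,\dots,x_d)<g(x_1,\dots,x_d)$ for all real tuples and set $w_n:=g(u^1_n,\dots,u^d_n)-f(u^1_n,\dots,u^d_n)$, so that $w_n>0$ for \emph{every} $n$. Then $\{n\in\N: w_n\ge 0\}=\N$ is dominant, giving $[w]\ge 0$, while $\{n\in\N: w_n=0\}=\emptyset$ is not dominant, giving $[w]\ne 0$; hence $[w]>0$ in the ordered ring $\LL$, i.e.\ ${}^*\!f(\alpha_1,\dots,\alpha_d)<{}^*\!g(\alpha_1,\dots,\alpha_d)$. I do not expect any genuine difficulty: once Definition~\ref{def:extensionOfContinuousFunctions} is in place the argument is pure bookkeeping, and the ultrafilter property is not even needed here. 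The one point that repays a moment's attention is precisely this last step --- a strict real inequality must produce a \emph{strict}, not merely weak, inequality in $\LL$, and this works because the ``agreement set'' $\{n: w_n\ge 0\}$ is all of $\N$ while the ``equality set'' $\{n: w_n=0\}$ is empty and hence non-dominant.
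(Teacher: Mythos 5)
Your proof is correct and follows essentially the same route as the paper's: the forward direction rests on the observation that the agreement set is all of $\N$, hence dominant, and the converse on the fact that ${}^*\!f,{}^*\!g$ extend $f,g$ via the embedding $\R\subset\LL$. Your explicit treatment of the strict inequality (weak inequality on a dominant set plus non-dominance of the empty equality set) correctly fills in the detail the paper leaves to the word ``analogously.''
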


\begin{proof}
\noindent The equality \eqref{eq:1ElementaryTransfer} implies 
\[
\left\{ n\in\N\,|\,
f(a_{n}^{1},\dots,a_{n}^{d})=g(a_{n}^{1},\dots,a_{n}^{d})\right\}
=\N,\] where $[a^{k}]=\alpha_{k}$. The whole set $\N$ is dominant, and
therefore \eqref{eq:2ElementaryTransfer} follows. The converse
implication follows from the fact that ${}^{*}f$ and ${}^{*}g$ extend
$f$ and $g$ and from the embedding $\R\subset\LL$.\end{proof}
\begin{rem}
The reader would have surely noted that some of the limitations we
have presented can be avoided by generalizing our construction
further.  For example, the ring $\LL$ is only an integral domain and
not a field, because it is not closed with respect to pointwise
inverse, because the latter are not Cauchy sequences. Similarly, we
cannot extend a general function $f:\R^{d}\rightarrow\R$ but only
continuous functions because we need to ensure that the image sequence
is Cauchy.  However, all the ideas we have introduced up to now work
if we replace $\mathcal{C}$ by the whole of $\R^{\N}$. In this way, we
obtain a field ${}^{*}\R$ and arbitrary functions can be extended. In
the setting of the ring $\LL$ only continuous functions can be
extended and hence a continuity hypothesis has to be assumed if one
wants to use its infinitesimals. For more details, see
Goldblatt~\cite{Go}.  In the present article, we adhere to the
framework of defining a new notion of completeness so as to add new
infinitesimal points to~$\R$.  We will motivate our definition by
developing some powerful key properties of an infinitesimal-enriched
extension of $\R$.
\end{rem}

Can Leibniz's law of continuity be proved for more general properties,
e.g. for order relations or disjunctions of equality and inequality or
even more general relations?  To solve this problem, we start, once
again, from a historical consideration.

Cauchy used infinitesimals to define continuity as follows: a
function~$f$ is continuous between two bounds if for all $x$ between
those bounds, the difference $f(x+h)-f(x)$ will be infinitesimal
whenever $h$ is infinitesimal, see \cite{Ca21}.  

Such a definition tends to bewilder a modern reader, used to thinking
of $f$ as being defined for real values of the variable $x$, but now
we can think of $f(x+h)$ as corresponding to ${}^{*\!}f(x+h)$. The
function $f$ is not necessarily defined on all of $\R$, so that an
extension of the real domain~$D$ of the function is implicit in
Cauchy's construction.  Therefore, we will start by defining such an
extension of $D\subseteq\R$.  We will first define the symbol
``$\in_n$'', and then define $\ext{\!D}$ in terms of $\in_n$.
\begin{defn}
\label{def:StarExtension}Let $u\in\mathcal{C}$ be a Cauchy sequence
and $D\subseteq\R$, then 
\begin{enumerate}
\item 
$u_{n}\in_{n}D\quad\iff\quad\left\{ n\in\N\,|\, u_{n}\in D\right\}
\text{ is dominant}$
\item $\ext{\!D}:=\left\{ [u]\in\LL\,|\, u_{n}\in_{n}D\right\} $
\end{enumerate}
Let us note that the variable $n$ is mute in the notation $u_{n}\in_{n}D$.

\end{defn}
\noindent Using this notation, our questions concerning Leibniz's
law of continuity can be formulated as preservation properties of
the operator $\ext{(-)}$. In fact, as in Theorem \ref{thm:elementaryTransfer},
where equalities between continuous functions are preserved, we can
ask whether $\ext{(-)}$ preserves intersections (i.e. {}``and''),
unions (i.e. {}``or''), set-theoretic difference (i.e. {}``not''),
inclusions (i.e. {}``if... then...''), etc. To this end, it is interesting
to note that a minimal set of extension properties necessary implies
ultrafilter conditions.

We will use a circle superscript ${}^{\circ}$ in place of a star
to indicate a general extension. 
\begin{thm}
\label{thm:extensionPropsImpliesUltrafilter} Assume that ${}^{\circ}(-):\mathcal{P}(\R)\rightarrow\mathcal{P}({}^{\circ}\R)$
preserves unions, intersections and complements, i,.e. for every $A$,
$B\subseteq\R$, we have \[
{}^{\circ}\left(A\cup B\right)={}^{\circ}A\cup{}^{\circ}B\]
 \[
{}^{\circ}\left(A\cap B\right)={}^{\circ}A\cap{}^{\circ}B\]
 \[
{}^{\circ}\left(A\setminus B\right)={}^{\circ}A\setminus{}^{\circ}B.\]
 Finally, let $e\in{}^{\circ}\R$. Then \[
\mathcal{R}_{e}:=\left\{ X\subseteq\R\,|\, e\in{}^{\circ}X\right\} \text{ is an ultrafilter on }\R,\]
 and if $e\in{}^{\circ}\N$, then\[
\mathcal{N}_{e}:=\left\{ X\cap\N\,|\, X\in\mathcal{R}_{e}\right\} \text{ is an ultrafilter on }\N.\]
 \end{thm}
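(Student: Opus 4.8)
The plan is to verify the three ultrafilter axioms for $\mathcal{R}_e$ one by one, each being a direct translation of one of the hypothesized preservation properties of ${}^{\circ}(-)$, and then to recognize $\mathcal{N}_e$ as the \emph{restriction} (or \emph{trace}) of the ultrafilter $\mathcal{R}_e$ to the set $\N$, which is a routine construction on ultrafilters.

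First I would record two preliminary observations. Applying preservation of set-theoretic difference with $A=B=\emptyset$ gives ${}^{\circ}\emptyset={}^{\circ}\emptyset\setminus{}^{\circ}\emptyset=\emptyset$, so $e\notin{}^{\circ}\emptyset$ and hence $\emptyset\notin\mathcal{R}_e$: the family $\mathcal{R}_e$ is proper. Secondly, for every $X\subseteq\R$ one has ${}^{\circ}X\subseteq{}^{\circ}\R$ (the operator takes values in $\mathcal{P}({}^{\circ}\R)$, and also $\R=X\cup(\R\setminus X)$ forces ${}^{\circ}\R={}^{\circ}X\cup{}^{\circ}(\R\setminus X)$), so the hypothesis $e\in{}^{\circ}\R$ is precisely the statement $\R\in\mathcal{R}_e$, making $\mathcal{R}_e$ nonempty. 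Now the three conditions follow at once: if $X,Y\in\mathcal{R}_e$ then $e\in{}^{\circ}X\cap{}^{\circ}Y={}^{\circ}(X\cap Y)$ by preservation of intersections, so $X\cap Y\in\mathcal{R}_e$; if $X\in\mathcal{R}_e$ and $X\subseteq Y\subseteq\R$ then from $Y=X\cup Y$ and preservation of unions we get ${}^{\circ}X\subseteq{}^{\circ}Y\ni e$, so $Y\in\mathcal{R}_e$; and for any $X\subseteq\R$, preservation of complements gives ${}^{\circ}(\R\setminus X)={}^{\circ}\R\setminus{}^{\circ}X$, whence $e\notin{}^{\circ}X$ implies $e\in{}^{\circ}(\R\setminus X)$, i.e. $\R\setminus X\in\mathcal{R}_e$. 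This shows $\mathcal{R}_e$ is an ultrafilter on $\R$.

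For the second assertion I would note that $e\in{}^{\circ}\N$ says exactly $\N\in\mathcal{R}_e$, and then invoke the general fact that whenever $\mathcal{U}$ is an ultrafilter on $\R$ with $\N\in\mathcal{U}$, the family $\mathcal{U}|_{\N}:=\{X\cap\N\,|\,X\in\mathcal{U}\}$ is an ultrafilter on $\N$; applying it to $\mathcal{U}=\mathcal{R}_e$ produces $\mathcal{N}_e$. Each axiom is checked directly: $\emptyset\notin\mathcal{U}|_{\N}$ since $X\cap\N=\emptyset$ with $X\in\mathcal{U}$ would give $\emptyset=X\cap\N\in\mathcal{U}$ by closure under intersection with $\N$; closure under intersection follows from $(X\cap\N)\cap(Y\cap\N)=(X\cap Y)\cap\N$; closure under supersets inside $\mathcal{P}(\N)$ is as in the filter step above; and for $Z\subseteq\N$, the dichotomy applied to $Z$ viewed as a subset of $\R$ yields either $Z\in\mathcal{U}$, so $Z=Z\cap\N\in\mathcal{U}|_{\N}$, or $\R\setminus Z\in\mathcal{U}$, so $\N\setminus Z=(\R\setminus Z)\cap\N\in\mathcal{U}|_{\N}$.

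I do not anticipate a genuine obstacle: the whole content is the pairing of each ultrafilter axiom with its preservation property. The one point demanding care is bookkeeping of the ambient set, i.e. reading the hypothesis $e\in{}^{\circ}\R$ as the assertion that ${}^{\circ}\R$ is the full universe in which the operator takes its values (the analogue of ``$\N$ is always a dominant set''), so that in ${}^{\circ}(\R\setminus X)={}^{\circ}\R\setminus{}^{\circ}X$ the complement is genuinely taken in all of ${}^{\circ}\R$; this is what powers the dichotomy for $\mathcal{R}_e$ and, in turn, the ultrafilter property of $\mathcal{N}_e$ on $\N$.
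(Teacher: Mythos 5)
Your proposal is correct and follows essentially the same route as the paper: each ultrafilter axiom for $\mathcal{R}_e$ is read off directly from the corresponding preservation property (after first noting ${}^{\circ}\emptyset=\emptyset$ and monotonicity), and $\mathcal{N}_e$ is handled as the trace of $\mathcal{R}_e$ on $\N$, with the hypothesis $e\in{}^{\circ}\N$ used exactly where the paper uses it, namely to keep the empty set out of the trace. The only cosmetic differences (deriving superset closure from union rather than intersection preservation, and writing the superset witness as $X\cup T$ instead of $(S\setminus X)\cup X$) do not change the argument.
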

\begin{proof}
We need first to prove that ${}^{\circ}(-)$ preserves also the empty
set and inclusions. Indeed, ${}^{\circ}\emptyset={}^{\circ}(\emptyset\setminus\emptyset)={}^{\circ}\emptyset\setminus{}^{\circ}\emptyset=\emptyset$.
Assume $A\subseteq B$, so that $A=A\cap B$ and ${}^{\circ}A={}^{\circ}A\cap{}^{\circ}B$
and thus ${}^{\circ}A\subseteq{}^{\circ}B$.

If $X$, $Y\in\mathcal{R}_{e}$, then $e\in{}^{\circ}X\cap{}^{\circ}Y={}^{\circ}(X\cap Y)$,
and hence $X\cap Y\in\mathcal{R}_{e}$. If $X\in\mathcal{U}_{e}$
and $\R\supseteq Y\supseteq X$, then $e\in{}^{\circ}X\subseteq{}^{\circ}Y$
and hence $Y\in\mathcal{R}_{e}$. If $X\subseteq\R$, then $\R=X\cup\left(\R\setminus X\right)$;
but $e\in{}^{\circ}\R={}^{\circ}X\cup\left({}^{\circ}\R\setminus{}^{\circ}X\right)$
and therefore $X\in\mathcal{R}_{e}$ or $\R\setminus X\in\mathcal{R}_{e}$,
and this finally proves that $\mathcal{R}_{e}$ is an ultrafilter
on $\R$ because every $X\in \mathcal{R}_e$ is not empty since ${}^\circ (-)$ preserves the empty set.

The proof that $\mathcal{N}_{e}$ is closed with respect to intersection
is direct. Consider $\N\supseteq S\supseteq X\cap\N$ with $X\in\mathcal{R}_{e}$;
then $Y:=(S\setminus X)\cup X\supseteq X$ and hence $Y\in\mathcal{R}_{e}$.
Therefore, $Y\cap\N=S$ because $X\cap\N\subseteq S$, and hence $S\in\mathcal{N}_{e}$.
Finally, if $S\subseteq\N$, then either $S\in\mathcal{R}_{e}$, and
thus $S=S\cap\N\in\mathcal{N}_{e}$, or $\R\setminus S\in\mathcal{R}_{e}$.
In the second case, $(\R\setminus S)\cap\N=\N\setminus S\in\mathcal{N}_{e}$.
Up to now, we didn't need the further hypothesis $e\in{}^{\circ}\N$.
However, in this case, if $X\in\mathcal{R}_{e}$, then $e\in{}^{\circ}X\cap{}^{\circ}\N={}^{\circ}(X\cap\N)\ne\emptyset$
and hence also $X\cap\N\ne\emptyset$. 
\end{proof}
Taking, e.g., $e=1\in\ext{\N}$, yields an ultrafilter (a so-called principal ultrafilter, see the next Corollary \ref{cor:propExt}).

The meaning of this theorem is the following: if one doesn't like the
idea \eqref{eq:idea_dominant} but wants to obtain something
corresponding to Leibniz's law of continuity, one must face the
problem that the corresponding extension operator ${}^{\circ}(-)$
cannot preserves {}``and'', {}``or'' and {}``not'' of arbitrary
subsets. In Section~\ref{sec:nilpotentInf}, where we will introduce
another idea to refine Cauchy's equivalence relation without using
ultrafilters, we will see that a corresponding law of continuity
holds, but only for open subsets, so that we are forced to define a
set-theoretical difference with values in open sets\[ A\setminus
B:=\text{int}(A\setminus B),\] where $\text{int}(-)$ is the interior
operator. Note that the use of open sets and this {}``not'' operator
correspond to the semantics of intuitionistic logic.

For the sake of completeness, we also add the following results, which
represents particular cases of the previous
Theorem~\ref{thm:extensionPropsImpliesUltrafilter}.
\begin{cor}
\label{cor:propExt}
In the hypotheses of Theorem
\ref{thm:extensionPropsImpliesUltrafilter}, if\begin{equation}
X\subseteq{}^{\circ}X\ ,\ \left({}^{\circ}X\setminus
X\right)\cap\R=\emptyset\quad\forall
X\subseteq\R,\label{eq:properExtension}\end{equation} then we have
that $e\in\R$ if and only if $\mathcal{R}_{e}$ is the \emph{principal
ultrafilter generated by $e$}, i.e.\begin{equation}
\mathcal{R}_{e}=\left\{ X\subseteq\R\,|\, e\in X\right\}
.\label{eq:principalUltrafilter}\end{equation}
 \end{cor}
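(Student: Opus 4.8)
The plan is to prove the two implications of the biconditional in Corollary~\ref{cor:propExt} separately, using Theorem~\ref{thm:extensionPropsImpliesUltrafilter} and the extra hypothesis~\eqref{eq:properExtension}.

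First I would prove the easier direction: assume $e\in\R$. Then for every $X\subseteq\R$, I claim $e\in{}^{\circ}X$ if and only if $e\in X$. Indeed, if $e\in X$, then by the first condition in~\eqref{eq:properExtension}, namely $X\subseteq{}^{\circ}X$, we get $e\in{}^{\circ}X$. Conversely, if $e\in{}^{\circ}X$ and $e\notin X$, then $e\in({}^{\circ}X\setminus X)\cap\R$, contradicting the second condition in~\eqref{eq:properExtension}. Hence $\mathcal{R}_{e}=\{X\subseteq\R\,|\,e\in{}^{\circ}X\}=\{X\subseteq\R\,|\,e\in X\}$, which is exactly the principal ultrafilter generated by $e$ as in~\eqref{eq:principalUltrafilter}.

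For the converse, I would assume $\mathcal{R}_{e}=\{X\subseteq\R\,|\,e\in X\}$ and derive $e\in\R$. By Theorem~\ref{thm:extensionPropsImpliesUltrafilter}, $\mathcal{R}_{e}$ is an ultrafilter, so in particular $\emptyset\notin\mathcal{R}_{e}$; since $\mathcal{R}_{e}$ consists of all subsets of $\R$ containing $e$, and $\R\in\mathcal{R}_{e}$, the family is nonempty. The key observation is that a principal ultrafilter $\{X\subseteq\R\,|\,e\in X\}$ must have the singleton $\{e\}$ as a member (it is closed under finite intersections and $\bigcap_{r\ne e}(\R\setminus\{r\})$-type reasoning shows the atom is $\{e\}$), but more directly: since $e\in\{e\}$ as a set-membership relation between a real number $e$ and a subset $\{e\}$ of $\R$, the very statement $\mathcal{R}_{e}=\{X\,|\,e\in X\}$ only makes sense if $e$ is an element of $\R$, because the right-hand side is defined using $e$ as a ranging element of $\R$. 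So really the content is: one shows that $\{e\}\in\mathcal{R}_{e}$ forces $e\in{}^{\circ}\{e\}$, hence ${}^{\circ}\{e\}\neq\emptyset$; and then invoking the other parts of Theorem~\ref{thm:extensionPropsImpliesUltrafilter} together with~\eqref{eq:properExtension} applied to the singleton — note $\{e\}\subseteq{}^{\circ}\{e\}$ and $({}^{\circ}\{e\}\setminus\{e\})\cap\R=\emptyset$ — identifies $e$ with the unique real in ${}^{\circ}\{e\}$.

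The step I expect to be the main obstacle is the converse direction, specifically pinning down precisely what ``$\mathcal{R}_{e}$ is the principal ultrafilter generated by $e$'' is allowed to presuppose: in the statement $e$ already appears on the right-hand side of~\eqref{eq:principalUltrafilter} as an element of $\R$, so one must be careful that the claim ``$e\in\R$'' is not vacuously built into the hypothesis. The honest reading is that~\eqref{eq:principalUltrafilter} is shorthand for ``there exists $r\in\R$ with $\mathcal{R}_{e}=\{X\subseteq\R\,|\,r\in X\}$'', and then one must show $e$ (a priori an element of ${}^{\circ}\R$) actually lies in $\R$ and equals that $r$. I would handle this by noting that $\{r\}\in\mathcal{R}_{e}$ gives $e\in{}^{\circ}\{r\}$; since by~\eqref{eq:properExtension} we have $({}^{\circ}\{r\}\setminus\{r\})\cap\R=\emptyset$, and since $r\in{}^{\circ}\{r\}$, to conclude $e=r\in\R$ it remains to show ${}^{\circ}\{r\}$ has no nonstandard points — which follows because ${}^{\circ}\{r\}={}^{\circ}(\R\setminus(\R\setminus\{r\}))={}^{\circ}\R\setminus{}^{\circ}(\R\setminus\{r\})$, and every point of ${}^{\circ}\R$ other than those coming from $\R\setminus\{r\}$ would have to be $r$ itself by a further application of the complement-preservation property to a suitable cover of $\R\setminus\{r\}$ by cofinitely-indexed sets; alternatively, and more cleanly, one simply observes that $\{e\}$-membership arguments reduce everything to the first (easy) direction applied in reverse. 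Once this subtlety is dispatched, the rest is a routine unwinding of definitions.
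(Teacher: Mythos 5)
Your forward direction ($e\in\R$ implies \eqref{eq:principalUltrafilter}) is exactly the paper's argument and is fine. The issue is the converse. The paper reads \eqref{eq:principalUltrafilter} literally: the right-hand side is the family of those $X\subseteq\R$ that contain the element $e\in{}^{\circ}\R$ itself, so the converse is a one-liner --- since $e\in{}^{\circ}\R$ we always have $\R\in\mathcal{R}_{e}$, and then \eqref{eq:principalUltrafilter} applied to $X=\R$ forces $e\in\R$; no singleton arguments and no analysis of ${}^{\circ}\{r\}$ are needed. Note that the literal reading is not meaningless when $e\notin\R$: in that case no subset of $\R$ contains $e$, the right-hand side of \eqref{eq:principalUltrafilter} is empty, and the equality simply fails, which is precisely what makes the implication go through.

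Your reparsing of the hypothesis as ``there exists $r\in\R$ with $\mathcal{R}_{e}=\left\{ X\subseteq\R\,|\, r\in X\right\}$'' turns the corollary into a different and in fact false statement, and the steps you sketch cannot be carried out. Nothing in the hypotheses of Theorem~\ref{thm:extensionPropsImpliesUltrafilter} together with \eqref{eq:properExtension} prevents ${}^{\circ}\{r\}$ from containing nonstandard points: take ${}^{\circ}\R=\R\cup\R^{*}$ with $\R^{*}$ a disjoint copy of $\R$ via $x\mapsto x^{*}$, and define ${}^{\circ}X:=X\cup\left\{ x^{*}\,|\, x\in X\right\}$. This operator preserves unions, intersections and differences and satisfies \eqref{eq:properExtension}; yet for $e=r^{*}\notin\R$ one gets $\mathcal{R}_{e}=\left\{ X\subseteq\R\,|\, r\in X\right\}$, a principal ultrafilter generated by a real, while $e\notin\R$. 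Hence your key claim that ${}^{\circ}\{r\}$ has no nonstandard points is not derivable from the hypotheses (the identity ${}^{\circ}\{r\}={}^{\circ}\R\setminus{}^{\circ}(\R\setminus\{r\})$ gives nothing beyond preservation of complements, and the appeal to a ``suitable cover by cofinitely-indexed sets'' has no content here), and the alternative remark that one ``reduces to the easy direction applied in reverse'' is circular. Adopting the paper's literal reading of \eqref{eq:principalUltrafilter} dissolves the difficulty and makes the converse immediate.
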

\begin{proof}
Let us assume that $e\in\R$ and prove the equality \eqref{eq:principalUltrafilter}.
If $e\in X\subseteq\R$, then $e\in{}^{\circ}X$ because $X\subseteq{}^{\circ}X$
by hypotheses, and therefore $X\in\mathcal{R}_{e}$. Vice versa if
$e\in{}^{\circ}X$, then ${}^{\circ}X=X\cup\left({}^{\circ}X\setminus X\right)$
and hence $e\in X$ because, by hypotheses, $\left({}^{\circ}X\setminus X\right)\cap\R=\emptyset$
and $e\in\R$.

Finally, the converse implication follows directly from the equality
\eqref{eq:principalUltrafilter} and from $\R\in\mathcal{R}_{e}$.
\end{proof}
Therefore, if the extension operation ${}^{\circ}X$ really extends $X$
(first condition of \eqref{eq:properExtension}) adding only new non real
points (second condition of \eqref{eq:properExtension}), then taking
$e\in\R$ we get a trivial ultrafilter. However, in our construction we
started from a free ultrafilter; this is the case considered in the
following corollary.
\begin{cor}
In the hypotheses of Corollary \ref{cor:propExt}, let us assume that
$\left({}^{\circ}\R,\le\right)$ is an ordered set extending the usual
order relation on the reals. Suppose that
$e\in{}^{\circ}\R\setminus\R$ is \emph{infinite} with respect to
$\left({}^{\circ}\R,\le\right)$, i.e.\[ \forall N\in\N:\ e>N\] and
also that\[ \forall N\in\N:\ e>N\ \Rightarrow\
e\in{}^{\circ}[N,+\infty),\] then the ultrafilter $\mathcal{N}_{e}$ is
free.
\end{cor}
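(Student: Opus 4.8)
The plan is to show that $\mathcal{N}_{e}$ contains the Fr\'echet filter $\mathcal{F}$ on $\N$. Since $\mathcal{N}_{e}$ is an ultrafilter on $\N$ by Theorem~\ref{thm:extensionPropsImpliesUltrafilter} (and the present corollary inherits its hypotheses via Corollary~\ref{cor:propExt}), and since an ultrafilter on $\N$ is free exactly when it extends $\mathcal{F}$ — equivalently, when it contains no finite set — establishing $\mathcal{F}\subseteq\mathcal{N}_{e}$ is precisely what is needed.

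First I would unwind the two assumptions on $e$. They state that $e>N$ for every $N\in\N$, and that $e>N$ implies $e\in{}^{\circ}[N,+\infty)$; combining them yields $e\in{}^{\circ}[N,+\infty)$ for every $N\in\N$. By the definition of $\mathcal{R}_{e}$ in Theorem~\ref{thm:extensionPropsImpliesUltrafilter}, this says $[N,+\infty)\in\mathcal{R}_{e}$ for every $N\in\N$. Hence, by the definition of $\mathcal{N}_{e}=\{X\cap\N\mid X\in\mathcal{R}_{e}\}$, each tail $\N_{\ge N}=[N,+\infty)\cap\N$ lies in $\mathcal{N}_{e}$.

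Next I would pass from the tails to all cofinite subsets of $\N$. If $S\subseteq\N$ has finite complement, then $\N\setminus S\subseteq\{0,\dots,N-1\}$ for some $N\in\N$, i.e. $\N_{\ge N}\subseteq S$; as $\mathcal{N}_{e}$ is a filter, it is closed under supersets, so $S\in\mathcal{N}_{e}$. Therefore $\mathcal{F}\subseteq\mathcal{N}_{e}$, and $\mathcal{N}_{e}$ is free.

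There is essentially no obstacle here: the content of the corollary is simply that the second hypothesis on $e$ was tailor-made so that the $\ext{(-)}$-images of the real half-lines witness that $\mathcal{N}_{e}$ contains arbitrarily thin tails of $\N$. The only point deserving a word of care is the standard translation of ``free'' into ``extends the Fr\'echet filter'' (equivalently, ``non-principal''); once that is in place, the verification is immediate from the definitions of $\mathcal{R}_{e}$ and $\mathcal{N}_{e}$.
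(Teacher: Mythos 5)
Your argument is correct and is essentially identical to the paper's proof: both combine the two hypotheses to get $[N,+\infty)\in\mathcal{R}_{e}$, hence each tail $[N,+\infty)\cap\N\in\mathcal{N}_{e}$, and then use closure under supersets to conclude that every cofinite subset of $\N$ lies in $\mathcal{N}_{e}$, so the ultrafilter extends the Fr\'echet filter and is free. No discrepancies to report.
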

For example, the field ${}^{*}\R_{\text{frac}}$ satisfies the
hypotheses of this corollary if we take
$e=\frac{[1]}{\left[\left(\frac{1}{n}\right)_{n}\right]}$.
\begin{proof}
By our hypothesis, every interval $[N,+\infty)=\left\{ x\in\R\,|\, x\ge N\right\} $
is in $\mathcal{R}_{e}$, therefore $[N,+\infty)\cap\N\in\mathcal{N}_{e}$.
If $X\subseteq\N$ is cofinite, then $\N\setminus X\subseteq[0,N)$
for some $N\in\N$ and hence $X\supseteq[N,+\infty)\cap\N$. From
Theorem \ref{thm:extensionPropsImpliesUltrafilter}, we have that
$\mathcal{N}_{e}$ is an ultrafilter, so that it is closed with respect
to supersets, and hence $X\in\mathcal{N}_{e}$. 
\end{proof}
Our operator $\ext{(-)}$ has the following preservation properties
of propositional logic operators.
\begin{thm}
\label{thm:transfPropositionalLogic}Let $A$, $B\subseteq\R$, then
the following preservation properties hold\end{thm}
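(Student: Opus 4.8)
The plan is to reduce every assertion to the single fact, established in the discussion preceding Theorem~\ref{thm:extensionPropsImpliesUltrafilter}, that the family $\mathcal{U}$ of dominant subsets of $\N$ is a free ultrafilter: it is closed under supersets and finite intersections, it does not contain $\emptyset$, and for each $S\subseteq\N$ exactly one of $S$ and $\N\setminus S$ lies in $\mathcal{U}$. For a Cauchy sequence $u$ and a set $D\subseteq\R$ write $I_u(D):=\{n\in\N\,|\,u_n\in D\}$, so that by Definition~\ref{def:StarExtension} one has $[u]\in\ext{D}$ if and only if $I_u(D)\in\mathcal{U}$. Since every $u_n$ is a genuine real number, the elementary identities $I_u(A\cup B)=I_u(A)\cup I_u(B)$, $I_u(A\cap B)=I_u(A)\cap I_u(B)$, $I_u(A\setminus B)=I_u(A)\setminus I_u(B)$, $I_u(\R)=\N$ and $I_u(\emptyset)=\emptyset$ hold for all $A,B,D\subseteq\R$. (That $I_u(D)\in\mathcal{U}$ does not depend on the chosen representative of $[u]$ is immediate: if $u\sim v$ then $\{n\,|\,u_n=v_n\}\in\mathcal{U}$ and $\{n\,|\,u_n=v_n\}\cap I_u(D)\subseteq I_v(D)$, so $I_u(D)\in\mathcal{U}$ forces $I_v(D)\in\mathcal{U}$, the reverse implication following by symmetry.)

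With this dictionary in place, first I would dispose of the easy cases. We get $\ext{\emptyset}=\emptyset$ because $\emptyset\notin\mathcal{U}$, and $\ext{\R}=\LL$ because $\N\in\mathcal{U}$; and if $A\subseteq B$ then $I_u(A)\subseteq I_u(B)$, so closure under supersets yields $\ext{A}\subseteq\ext{B}$. For intersections, $I_u(A\cap B)=I_u(A)\cap I_u(B)$ lies in $\mathcal{U}$ if and only if both $I_u(A)$ and $I_u(B)$ do (forward direction by superset closure, backward direction by closure under finite intersections), which is exactly $\ext{(A\cap B)}=\ext{A}\cap\ext{B}$. For unions, the inclusion $\ext{A}\cup\ext{B}\subseteq\ext{(A\cup B)}$ again follows from superset closure; for the reverse inclusion I would argue by contradiction: if neither $I_u(A)$ nor $I_u(B)$ is in $\mathcal{U}$, then by the ultrafilter dichotomy $\N\setminus I_u(A)$ and $\N\setminus I_u(B)$ both are, hence so is their intersection $\N\setminus\big(I_u(A)\cup I_u(B)\big)=\N\setminus I_u(A\cup B)$, and together with $I_u(A\cup B)\in\mathcal{U}$ this would put $\emptyset$ in $\mathcal{U}$.

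The only genuine obstacle is set-theoretic difference, since for a mere filter the operator $\ext{(-)}$ cannot be expected to commute with complementation; this is exactly where the ultrafilter dichotomy is indispensable. The point is that $[u]\notin\ext{B}$ means $I_u(B)\notin\mathcal{U}$, which by the dichotomy is equivalent to $\N\setminus I_u(B)=I_u(\R\setminus B)\in\mathcal{U}$. Hence $[u]\in\ext{A}\setminus\ext{B}$ holds if and only if $I_u(A)\in\mathcal{U}$ and $I_u(\R\setminus B)\in\mathcal{U}$, which by closure under finite intersections (and superset closure for the converse) is equivalent to $I_u(A)\cap I_u(\R\setminus B)=I_u(A\setminus B)\in\mathcal{U}$, i.e.\ to $[u]\in\ext{(A\setminus B)}$. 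This gives $\ext{(A\setminus B)}=\ext{A}\setminus\ext{B}$ and finishes the proof; the contrast with Section~\ref{sec:nilpotentInf}, where this difference must be replaced by $\text{int}(A\setminus B)$, is precisely the price of dispensing with the ultrafilter.
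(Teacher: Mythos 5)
Your argument runs on exactly the same engine as the paper's proof: the paper proves only the union case, using the dominant-set (ultrafilter) dichotomy and closure under supersets and intersections, and declares the remaining items ``similar''; your $I_u(D)$ dictionary, the dichotomy argument for unions and for set-theoretic difference, and the explicit well-definedness check (which the paper silently skips) are all correct and in the same spirit, indeed more carefully carried out than in the paper for items (1), (2), (3), (5) and the forward half of (4).

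However, you declare the proof finished while two of the stated claims are never touched: the converse of item (4), $\ext{A}\subseteq\ext{B}\Rightarrow A\subseteq B$, and item (6), $\ext{A}=\ext{B}$ if and only if $A=B$. These are not consequences of your filter-side identities alone, because they are assertions about real points rather than about classes $[u]$; they require the embedding of $\R$ into $\LL$ by constant sequences together with the fact that $\emptyset\notin\mathcal{U}$. Concretely, for $a\in A$ the constant sequence gives $I_a(A)=\N\in\mathcal{U}$, so $[a]\in\ext{A}\subseteq\ext{B}$ and hence $I_a(B)\in\mathcal{U}$; but $I_a(B)$ is either $\N$ or $\emptyset$, and since $\emptyset\notin\mathcal{U}$ it must be $\N$, i.e.\ $a\in B$. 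Item (6) then follows by applying both directions of (4). The repair is easy, but as written your proof stops short of the full statement, and the missing direction is of a genuinely different (pointwise, embedding-based) character from the dictionary manipulations you rely on elsewhere.
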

\begin{enumerate}
\item $\ext{\left(A\cup B\right)}=\ext{A}\cup\ext{B}$ 
\item $\ext{\left(A\cap B\right)}=\ext{A}\cap\ext{B}$ 
\item $\ext{\left(A\setminus B\right)}=\ext{A}\setminus\ext{B}$ 
\item $A\subseteq B$ if and only if $\ext{A}\subseteq\ext{B}$ 
\item $\ext{\emptyset}=\emptyset$ 
\item $\ext{A}=\ext{B}$ if and only if $A=B$.\end{enumerate}
\begin{proof}
For example, we will prove the preservation of unions, the other proofs
being similar. Take $[u]\in\ext{\left(A\cup B\right)}$, then $\left\{ n\,|\, u_{n}\in A\cup B\right\} $
is dominant. If $\left\{ n\,|\, u_{n}\in A\right\} $ is dominant,
then $[u]\in\ext{A}$; vice versa, $\left\{ n\,|\, u_{n}\notin A\right\} $
is dominant and therefore it is also the intersection\[
\left\{ n\,|\, u_{n}\in A\cup B\right\} \cap\left\{ n\,|\, u_{n}\notin A\right\} =\left\{ n\,|\, u_{n}\in B\right\} ,\]
 so that $[u]\in\ext{B}$. Vice versa, if e.g. $[u]\in\ext{A}$, then
$\left\{ n\,|\, u_{n}\in A\right\} $ is dominant, and hence also
the superset $\left\{ n\,|\, u_{n}\in A\cup B\right\} $ is dominant,
i.e. $[u]\in\ext{\left(A\cup B\right)}$.\end{proof}
\begin{example}
Let $A$, $B$, $C\subseteq\R$ and write e.g. $A(x)$ to mean $x\in A$.
We want to see that our previous Theorem \ref{thm:transfPropositionalLogic}
implies that Leibniz's law of continuity applies to complicated formulas
like\begin{equation}
\forall x\in\R:\ A(x)\ \Rightarrow\ \left[B(x)\text{ and }\left(C(x)\ \Rightarrow\ D(x)\right)\right].\label{eq:ExampleTransfProp1}\end{equation}
 In other words, we will show how to apply the previous theorem to
show that \eqref{eq:ExampleTransfProp1} holds if and only if the
following formula holds\begin{equation}
\forall x\in\LL:\ \ext{A}(x)\ \Rightarrow\ \left[\ext{B}(x)\text{ and }\left(\ext{C}(x)\ \Rightarrow\ \ext{D}(x)\right)\right],\label{eq:ExampleTransfProp2}\end{equation}
 where e.g. $\ext{A}(x)$ means $x\in\ext{A}$. In fact, if we assume
\eqref{eq:ExampleTransfProp1}, this implies that $A\subseteq B$
and hence, by Theorem \ref{thm:transfPropositionalLogic}, $\ext{A}\subseteq\ext{B}$.
Therefore, if we assume $\ext{A}(x)$, for $x\in\LL$, from this we
immediately obtain $\ext{B}(x)$. The hypotheses \eqref{eq:ExampleTransfProp1}
also implies that $A\cap C\subseteq D$, so that if we further assume
$\ext{C}(x)$ we also obtain that $\ext{D}(x)$ holds, and this concludes
the proof of \eqref{eq:ExampleTransfProp2}. Analogously we can prove
the opposite implication.\end{example}
\begin{rem}
Of course, the previous example can be generalized to every logical
formula, proceeding by induction on the length of the formula, but
this requires the usual (simple) background of (elementary) formal
logic.  As it is well known (see e.g. \cite{Lu73,Hen,Be-DN,Kei}), our
example further shows that this {}``more advanced'' use of nonstandard
analysis can be left only to selected readers.
\end{rem}
Now, the next problem is natural: what about the preservation of
existential and universal quantifier? We have already considered the
case of logical connectives like {}``and'', {}``or'', {}``not''
without stressing too much the need to have a background in formal
logic. This permits to simplify our presentation and opens this type
of setting to a more general audience, including physicists and
engineers.  We wish to retain the same attitude also toward
quantifiers.  For this goal we consider two sets $X$, $Y\subseteq\R$
and the projection $p_{X}:X\times Y\rightarrow X$, $p_{X}(x,y)=x$, and
$C\subseteq X\times Y$, i.e. a relation of the form $C(x,y)$ with
$x\in X$ and $y\in Y$. We have\begin{align*} p_{X}(C) & =\left\{ x\in
X\,|\,\exists z\in C:\ x=p_{X}(z)\right\} =\\ & =\left\{ x\in
X\,|\,\exists y\in Y:\ C(x,y)\right\} ;\\ X\setminus
p_{X}\left[(X\times Y)\setminus C\right] & =\left\{ x\in
X\,|\,\neg\left(\exists y\in Y:\ (x,y)\notin C\right)\right\} =\\ &
=\left\{ x\in X\,|\,\forall y\in Y:\ C(x,y)\right\} .\end{align*}
Therefore, now our aim is to prove that $\ext{(-)}$ preserves
$p_{X}(C)$, which corresponds to the existential quantifier
(preservation of universal quantifier follows from this and from the
preservation of difference).  Only here we notice that, exactly as we
proceeded for functions considering only the continuous ones, we need
an analogous condition for a relation: what is a \emph{continuous
relation} $C\subseteq X\times Y$? To find the corresponding
definition, we start from the idea that if $f:\R\rightarrow\R$ is
continuous, then we expect that the relation $\left\{ (x,y)\in X\times
Y\,|\, y=f(x)\right\} $ is continuous. We can therefore note that the
peculiarity of the definition of the extension ${}^{*}f$ (see
Definition \ref{def:extensionOfContinuousFunctions}) is that
continuity permits us to define ${}^{*}f$ on all of $\LL$.  Otherwise,
we would always have the possibility to define ${}^{*}f$ on the
smaller domain
\[ 
\left\{ [u]\in\LL\,|\, f\circ u\in\mathcal{C}\right\} .
\] 
For this reason, we start by introducing the following definition.
\begin{defn}
\label{def:extensionOfRelation}
Let $X$, $Y\subseteq\R$ and $C\subseteq X\times Y$, then\[
\ext{C}:=\left\{
\left([u],[v]\right)\in\ext{X}\times\ext{Y}\,|\,\left(u_{n},v_{n}\right)\in_{n}C\right\}.
\] 
\end{defn} 
Next, we compare $\text{dom}(\ext{C})$ and
$\ext{\left[\text{dom}(C)\right]}$ as follows.
\begin{thm}
\label{thm:relationsDomCod}In the previous hypothesis, we always
have\[
\text{\emph{dom}}(\ext{C})\subseteq\ext{\left[\text{\emph{dom}}(C)\right]}\]
 \[
\text{\emph{cod}}(\ext{C})\subseteq\ext{\left[\text{\emph{cod}}(C)\right]},\]
 where $\text{\emph{dom}}(C)=\left\{ x\in X\,|\,\exists y\in Y:\ C(x,y)\right\} $
is the domain of $C$, and $\text{\emph{cod}}(C)=\left\{ y\in Y\,|\,\exists x\in X:\ C(x,y)\right\} $
is the codomain of $C$. \end{thm}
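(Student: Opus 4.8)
The plan is to unwind Definition~\ref{def:extensionOfRelation} together with the definitions of domain and of the membership relation $\in_n$, and then to invoke the single filter property that a superset of a dominant set is dominant.

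First I would fix an arbitrary $\alpha \in \text{dom}(\ext{C})$. By definition of the domain of a relation, $\alpha = [u]$ for some $[u] \in \ext{X}$ such that there is $[v] \in \ext{Y}$ with $\bigl([u],[v]\bigr) \in \ext{C}$. By Definition~\ref{def:extensionOfRelation} this means $(u_n, v_n) \in_n C$, i.e. the index set
\[
S := \bigl\{\, n \in \N \;\big|\; (u_n, v_n) \in C \,\bigr\}
\]
is dominant. Now comes the one substantive observation, which is purely set-theoretic: $(u_n, v_n) \in C$ forces $u_n \in \text{dom}(C)$, hence
\[
S \subseteq \bigl\{\, n \in \N \;\big|\; u_n \in \text{dom}(C) \,\bigr\}.
\]
Since a superset of a dominant set is dominant, the right-hand side is dominant, i.e. $u_n \in_n \text{dom}(C)$; as $\text{dom}(C) \subseteq \R$, this is exactly the statement $[u] \in \ext{[\text{dom}(C)]}$. (Note that $[u] \in \ext{X}$ then follows automatically, since $\text{dom}(C) \subseteq X$, although Definition~\ref{def:StarExtension} does not require checking it separately.) The inclusion $\text{cod}(\ext{C}) \subseteq \ext{[\text{cod}(C)]}$ is obtained by the same argument applied to the second coordinate: starting from $\bigl([u],[v]\bigr) \in \ext{C}$ with $[v]$ the chosen element of the codomain, one has the same dominant set $S$, uses the implication $(u_n, v_n) \in C \Rightarrow v_n \in \text{cod}(C)$, and concludes by superset-closure that $v_n \in_n \text{cod}(C)$, i.e. $[v] \in \ext{[\text{cod}(C)]}$.

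I do not expect a genuine obstacle here: once the definitions are spelled out, everything reduces to bookkeeping plus the filter axiom, together with the harmless remark that $\in_n$ is well defined on $\LL$ so that the choice of representative $u$ is immaterial. The only point worth emphasizing is why the theorem claims an inclusion rather than an equality. For the reverse inclusion one would need, given $[u]$ with $u_n \in \text{dom}(C)$ on a dominant set, to choose witnesses $y_n \in Y$ with $(u_n, y_n) \in C$ and assemble them into an element of $\ext{Y}$ --- but the resulting sequence $(y_n)_n$ need not be Cauchy, hence need not define a class in $\LL$. This is precisely the gap that will later force the notion of a \emph{continuous relation} and the analysis of whether $\ext{(-)}$ preserves the existential quantifier.
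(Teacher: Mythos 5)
Your proposal is correct and follows essentially the same route as the paper: the paper's proof also takes $[u]\in\text{dom}(\ext{C})$, picks a witness $[v]$ with $([u],[v])\in\ext{C}$, and passes from $(u_n,v_n)\in_n C$ to $u_n\in_n\text{dom}(C)$, merely leaving implicit the superset-closure step that you spell out. Your closing remark about why only an inclusion holds matches the paper's motivation for introducing continuous relations.
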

\begin{proof}
We prove, e.g., the relation about the domains. If $[u]\in\text{dom}(\ext{C})$,
then there exists $v$ such that $([u],[v])\in\ext{C}$, i.e. $u_{n}\in_{n}\text{dom}(C)$,
and this means that $[u]\in\ext{\left[\text{dom}(C)\right]}$. 
\end{proof}
\noindent Therefore, it is the opposite inclusion that represents
our idea of a continuous relation. 
\begin{defn}
\label{def:continuousRelation}In the previous hypothesis, we say
that: 
\begin{enumerate}
\item $C$ is continuous in the domain iff $\text{dom}(\ext{C})\supseteq\ext{\left[\text{dom}(C)\right]}$. 
\item $C$ is continuous in the codomain iff $\text{cod}(\ext{C})\supseteq\ext{\left[\text{cod}(C)\right]}$. 
\end{enumerate}
\end{defn}
For example, in the case $C=\text{graph}(f)$, the continuity in the
domain says that ${}^{*}f$ is defined on the whole $\ext{X}$. Analogously,
we can define the continuity of an $n$-ary relation with respect
to its $k$-th slot. 
\begin{thm}
If $X$, $Y\subseteq\R$, and $f:X\rightarrow Y$, then $f$ is continuous
if and only if $\text{graph}(f)$ is continuous in the domain. 
\end{thm}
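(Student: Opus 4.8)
The plan is to translate each side into a statement about Cauchy sequences and then pass between them. Write $C:=\mathrm{graph}(f)=\{(x,f(x)):x\in X\}\subseteq X\times Y$. Since $f$ is total on $X$ we have $\mathrm{dom}(C)=X$, hence $\ext{[\mathrm{dom}(C)]}=\ext X$, and Theorem~\ref{thm:relationsDomCod} already gives $\mathrm{dom}(\ext C)\subseteq\ext X$. So ``$C$ is continuous in the domain'' is equivalent to the reverse inclusion $\ext X\subseteq\mathrm{dom}(\ext C)$, i.e.\ to the assertion that for every $[u]\in\ext X$ there is $[v]\in\ext Y$ with $\{n\in\N:u_n\in X\text{ and }v_n=f(u_n)\}$ dominant. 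This reformulation is what I would work with throughout.

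For the direction ``$f$ continuous $\Rightarrow$ $\mathrm{graph}(f)$ continuous in the domain'', I would fix $[u]\in\ext X$ with a Cauchy representative $u$, put $S:=\{n:u_n\in X\}$ (dominant, hence infinite), and observe that along $S$ the Cauchy sequence $u$ converges to $L:=\lim_n u_n$ with every $u_n\in X$, so $L\in\overline X$. I would then set $v_n:=f(u_n)$ for $n\in S$ and extend $v$ off $S$ by a constant; continuity of $f$ forces $\bigl(f(u_n)\bigr)_{n\in S}$ to be convergent, at least when $L\in X$ (to $f(L)$), so that $v\in\mathcal C$ and $v_n\in Y$ for all $n$, hence $[v]\in\ext Y$, and $\{n:u_n\in X,\ v_n=f(u_n)\}\supseteq S$ is dominant; thus $([u],[v])\in\ext C$ and $[u]\in\mathrm{dom}(\ext C)$. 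The remaining case $L\in\overline X\setminus X$ I address below.

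For the converse I would argue by contraposition. If $f$ is discontinuous at some $x_0\in X$, pick $\varepsilon_0>0$ and $x_n\to x_0$ in $X$ with $|f(x_n)-f(x_0)|\ge\varepsilon_0$ for all $n$; interleaving $(x_n)$ with the constant sequence $x_0$ yields a Cauchy sequence $w$ with limit $x_0$, so $[w]\in\ext X$. If $\mathrm{graph}(f)$ were continuous in the domain there would be a Cauchy $v$ with $[v]\in\ext Y$ and $D:=\{n:v_n=f(w_n)\}$ dominant; since one of the two residue classes modulo $2$ lies in the ultrafilter, $D$ meets it in a dominant (hence infinite) set, and reading off $\lim_n v_n$ along that set—the constant value $f(x_0)$ on the even part, or a limit at distance $\ge\varepsilon_0$ from $f(x_0)$ on the odd part—is meant to contradict the transfer of the closeness $[w]\approx x_0$, i.e.\ the microcontinuity that the extension should impose.

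The step I expect to be the real obstacle is the boundary behaviour of $X$: when the Cauchy sequence representing a point of $\ext X$ tends to a limit $L\in\overline X\setminus X$, mere pointwise continuity of $f$ on $X$ need not make $\bigl(f(u_n)\bigr)_{n\in S}$ a Cauchy sequence (witness $x\mapsto 1/x$ or $x\mapsto\sin(1/x)$ on $(0,1]$), so the forward direction really wants $X$ closed, or $f$ uniformly continuous (equivalently, $f$ extending continuously to $\overline X$). The same phenomenon—compounded by the fact that it is the ultrafilter, and not we, that decides which part of an interleaved sequence is dominant—is exactly what makes the existential quantifier over $v$ in the converse delicate, and indicates that ``continuous'' here is to be read in Cauchy's microcontinuity sense (cf.\ the $\ext{(-)}$ machinery and Remark~\ref{rem:P-poinInNSA}) rather than merely at isolated points.
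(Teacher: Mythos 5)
Your reduction of ``continuity in the domain'' to the single inclusion $\ext{X}\subseteq\text{dom}(\ext{C})$, and your treatment of the forward implication, follow exactly the consideration the paper itself offers (the reformulation \eqref{eq:meaningOfContinuousRelation}); the paper gives nothing beyond that reformulation by way of proof, so your closer analysis is genuinely useful. Your worry about limits $L\in\overline X\setminus X$ is well founded, and it is even witnessed by the paper's own example: the relation $x\cdot y=1$ with $u_n=\frac{1}{n+1}$ is precisely the graph of the continuous function $x\mapsto 1/x$ on $\R\setminus\{0\}$, and the paper declares it non-continuous in the domain; so the forward direction only works if ``continuous'' is read as Cauchy-continuity (equivalently, continuity plus $X$ closed, or continuous extendability to $\overline X$), exactly as you suspect. (One small repair in your construction: off the dominant set $S$ you must extend $v$ by the value $f(L)$, not an arbitrary constant, since $\N\setminus S$ may be infinite and an arbitrary constant would destroy the Cauchy property.)

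The genuine gap is in your converse. The contradiction you hope for does not exist: continuity in the domain asserts only that \emph{some} $[v]\in\ext{Y}$ with $\left\{n\,|\,v_n=f(w_n)\right\}$ dominant exists; nothing in Definition \ref{def:continuousRelation} forces $[v]\approx f(x_0)$ or any microcontinuity of an extension, and the ultrafilter is free to make the ``constant $x_0$'' half of your interleaved sequence dominant, in which case the constant sequence $f(x_0)$ is a perfectly legitimate witness and no contradiction arises. In fact, under the literal Definitions \ref{def:extensionOfRelation}--\ref{def:continuousRelation} the converse is false: take $f:\R\rightarrow\R$ to be the characteristic function of $(0,+\infty)$ (or of $\Q$). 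For any Cauchy $u$, the finitely many sets $\left\{n\,|\,f(u_n)=c\right\}$ cover $\N$, so one of them is dominant, and the corresponding constant sequence $c$ shows $[u]\in\text{dom}(\ext{C})$; hence $\text{graph}(f)$ is continuous in the domain although $f$ is discontinuous. So no argument can close your converse as stated: the equivalence holds only under a stronger reading of the right-hand side, e.g.\ requiring, as in the discussion preceding Definition \ref{def:extensionOfRelation}, that $f\circ u\in\mathcal{C}$ for every Cauchy sequence $u$ with values in $X$ (so that ${}^*\!f([u]):=[f\circ u]$ is defined on all of $\ext{X}$), paired on the left-hand side with the Cauchy-continuity reading you already identified; with those readings both of your arguments (the direct construction of $v$, and the interleaving for the converse) do go through.
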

The proof of this theorem can be directly deduced from the following
consideration. The continuity of $C$ in the domain can be written
as\begin{equation} \forall u\in\mathcal{C}:\
u_{n}\in_{n}\text{dom}(C)\quad\Rightarrow\quad\exists
y\in\ext{Y}:\ext{C}\left([u],y\right).\label{eq:continuousRelationDomain_1}\end{equation}
We can write this condition in a more meaningful way if we use the
following notation for an arbitrary property $\mathcal{P}(n)$:
\[
\left[\forall^{\text{d}}n:\ \mathcal{P}(n)\right]\quad
\iff\quad\left\{ n\in\N\,|\,\mathcal{P}(n)\right\} \text{ is
dominant}.
\] 
For example, $u_{n}\in_{n}D$ can now be written as
$\forall^{\text{d}}n:\ u_{n}\in D$.  Therefore,
\eqref{eq:continuousRelationDomain_1} can be written
as\begin{equation} \forall
u\in\mathcal{C}:\left(\forall^{\text{d}}n\,\exists y\in Y:\
C(u_{n},y)\right)\ \Rightarrow\ \exists y\in\ext{Y}:\
\ext{C}([u],y).\label{eq:meaningOfContinuousRelation}\end{equation}

\noindent This can be meaningfully interpreted in the following way:
if we are able to solve the equation\[
C(u_{n},y_{n})=\text{true}\]
 finding a solution $y_{n}\in Y$ for a dominant set of indices $n$,
then we are also able to solve the equation\[
\ext{C}([u],y)=\text{true}\]
 for a solution $y\in\ext{Y}$.

\noindent Using this formulation, it is not hard to prove that all
the relations $=$, $<$ and $\le$ are continuous both in the domain
and in the codomain. An expected example of non continuous relation
is $x\cdot y=1$ (take, e.g., $u_{n}:=\frac{1}{n+1}$ in \eqref{eq:meaningOfContinuousRelation}).
This corresponds to the non applicability of Leibniz's law of continuity
to the field property\[
\forall x\in\R:\ x\ne0\ \Rightarrow\ \exists y\in\R:\ x\cdot y=1,\]
 which cannot be transferred to our $\LL$, which is only a ring and
not a field.

Now, we can formulate the preservation of quantifiers: 
\begin{thm}
\label{thm:preservationQuantifiers}Let $X$, $Y\subseteq\R$ and
$C\subseteq X\times Y$ be a relation continuous in the domain, then
\[
\ext{\left[p_{X}(C)\right]}=p_{{\,}^*\!X_{\text{\rm f}}}(\ext{C}).
\]
That is
\[
\ext{\left\{ x\in X\,|\,\exists y\in Y:\ C(x,y)\right\} }=\left\{
x\in\ext{X}\,|\,\exists y\in\ext{Y}:\ \ext{C}(x,y)\right\} .\] As a
consequence we also have\[ \ext{\left\{ x\in X\,|\,\forall y\in Y:\
C(x,y)\right\} }=\left\{ x\in\ext{X}\,|\,\forall y\in\ext{Y}:\
\ext{C}(x,y)\right\} .
\]
\end{thm}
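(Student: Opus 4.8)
The plan is to get the first (existential) equality almost immediately from the material already set up, and then to derive the universal-quantifier equality from it by a de~Morgan duality combined with the preservation of set-theoretic difference from Theorem~\ref{thm:transfPropositionalLogic}.

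For the first equality, note that $p_{X}(C)=\mathrm{dom}(C)$ and, since $\ext{C}\subseteq\ext{X}\times\ext{Y}$ by Definition~\ref{def:extensionOfRelation}, that $p_{\ext{X}}(\ext{C})=\mathrm{dom}(\ext{C})$. With these identifications, the inclusion $p_{\ext{X}}(\ext{C})\subseteq\ext{[p_{X}(C)]}$ is exactly Theorem~\ref{thm:relationsDomCod}, while the reverse inclusion $\ext{[p_{X}(C)]}\subseteq p_{\ext{X}}(\ext{C})$ is, verbatim, the hypothesis that $C$ is continuous in the domain, i.e. Definition~\ref{def:continuousRelation}(1). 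So the first equality needs nothing new.

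For the universal statement I would first recall the elementary set identity used in the computation preceding Definition~\ref{def:extensionOfRelation}: writing $D:=(X\times Y)\setminus C$, one has $\{x\in X\,|\,\forall y\in Y:\ C(x,y)\}=X\setminus p_{X}(D)$, and the same identity applied inside $\ext{X}\times\ext{Y}$ gives $\{x\in\ext{X}\,|\,\forall y\in\ext{Y}:\ \ext{C}(x,y)\}=\ext{X}\setminus p_{\ext{X}}\!\left((\ext{X}\times\ext{Y})\setminus\ext{C}\right)$. Applying $\ext{(-)}$ to the first identity and using preservation of difference (Theorem~\ref{thm:transfPropositionalLogic}(3)) yields $\ext{[\,\{x\,|\,\forall y:\ C(x,y)\}\,]}=\ext{X}\setminus\ext{[p_{X}(D)]}$, so the whole statement reduces to the single identity $\ext{[p_{X}(D)]}=p_{\ext{X}}\!\left((\ext{X}\times\ext{Y})\setminus\ext{C}\right)$. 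To establish it I would (i) prove a ``preservation of difference for relations'', namely $\ext{D}=(\ext{X}\times\ext{Y})\setminus\ext{C}$, by unwinding Definitions~\ref{def:StarExtension} and~\ref{def:extensionOfRelation} and using that the dominant sets form an ultrafilter (closed under finite intersections, with a set belonging to it precisely when its complement does not); and (ii) apply the already proved first part of the present theorem to the relation $D$, obtaining $\ext{[p_{X}(D)]}=p_{\ext{X}}(\ext{D})$. Chaining (i) and (ii) completes the proof.

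The delicate point --- and the one place where a hypothesis is tacitly used --- is step~(ii): to invoke the existential transfer for $D$ one needs $D=(X\times Y)\setminus C$ to be itself continuous in the domain, which is not literally listed among the hypotheses. I would dispose of this by remarking that continuity in the domain holds for all the relations one actually wishes to transfer, namely $=$, $<$, $\le$ and their negations $\ne$, $\ge$, $>$, and is inherited under the operations used to build up formulas, so that nothing further is required for the intended applications; alternatively one just adds ``$(X\times Y)\setminus C$ continuous in the domain'' to the hypotheses. Spelled out, step~(ii) asserts that from a Cauchy sequence $u$ with $u_{n}\in_{n}\mathrm{dom}(D)$ --- equivalently, such that $\neg C(u_{n},y)$ is solvable for $y\in Y$ on a dominant set of indices $n$ --- one can manufacture a single Cauchy sequence $v$ with $[v]\in\ext{Y}$ witnessing $([u],[v])\in\ext{D}$; and, exactly as in the discussion of non-continuous relations following~\eqref{eq:meaningOfContinuousRelation}, the obstruction is that the naive pointwise choice of witnesses $y_{n}\in Y$ need not converge, hence need not define an element of $\mathcal{C}$. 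Everything else is routine bookkeeping with dominant sets and the ultrafilter axioms.
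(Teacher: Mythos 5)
Your proof of the existential equality is correct and is essentially the paper's own argument: the paper unwinds $u_n\in_n p_X(C)$ to get $[u]\in\ext{X}$ and $[u]\in\ext{\left[\text{dom}(C)\right]}$, invokes continuity in the domain to produce the witness $\beta\in\ext{Y}$, and obtains the reverse inclusion from Theorem~\ref{thm:relationsDomCod}; your identification $p_X(C)=\mathrm{dom}(C)$, $p_{\ext{X}}(\ext{C})=\mathrm{dom}(\ext{C})$ merely compresses this to one line. Where you genuinely add something is the universal clause, which the paper dispatches with ``as a consequence'' (echoing its earlier remark that universal quantifiers follow from existential ones plus preservation of difference). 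Carrying out that de~Morgan reduction explicitly, as you do, shows that it applies the existential transfer to the complementary relation $D=(X\times Y)\setminus C$, and therefore needs $D$ --- not $C$ --- to be continuous in the domain; this hypothesis is not implied by the stated one. Concretely, granting your step~(i) (which does go through via the ultrafilter property) and Theorem~\ref{thm:relationsDomCod} applied to $D$, one gets unconditionally the inclusion $\ext{\left\{x\in X\,|\,\forall y\in Y:\ C(x,y)\right\}}\subseteq\left\{x\in\ext{X}\,|\,\forall y\in\ext{Y}:\ \ext{C}(x,y)\right\}$, but the reverse inclusion requires assembling a single Cauchy sequence of counterexample witnesses $y_n$, which is exactly the failure mode the paper itself illustrates after~\eqref{eq:meaningOfContinuousRelation}. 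So your flag marks a genuine lacuna in the theorem as stated rather than a defect of your argument, and your proposed fixes (add ``$(X\times Y)\setminus C$ continuous in the domain'' to the hypotheses, or verify it for the concrete relations $=$, $<$, $\le$ and their negations that one actually transfers) are the appropriate ones.
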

\begin{proof}
If $[u]\in\ext{\left[p_{X}(C)\right]}$, then $u_{n}\in_{n}p_{X}(C)$,
i.e.\[ \forall^{\text{d}}n:\ u_{n}\in X\ ,\ \exists y\in Y:\
C(u_{n},y),\] that is the set of $n\in\N$ satisfying this relation is
dominant.  This implies that $u_{n}\in_{n}X$ and hence $[u]\in\ext{X}$
and $u_{n}\in_{n}\text{dom}(C)$,
i.e. $[u]\in\ext{\left[\text{dom}(C)\right]}$.  Our relation $C$ is
continuous, so that $[u]\in\text{dom}(\ext{C})$, i.e.\[
\exists\beta\in\ext{Y}:\ \ext{C}([u],\beta),\] which can also be
written as 
\[
[u]\in p_{{\,}^*\!X_{\text{\rm f}}}(\ext{C}).
\]
To prove the opposite inclusion it suffices to reverse this deduction
and use Theorem \ref{thm:relationsDomCod} instead of the Definition
\ref{def:continuousRelation} of continuous relation.\end{proof}
\begin{example}
\label{exa:quantifiers}Let us apply our transfer theorems to a sentence
of the form\begin{equation} \forall a\in A\,\exists b\in B:\
C(a,b)\label{eq:ExampleTransfQuant1}\end{equation} showing that it is
equivalent to\begin{equation} \forall a\in\ext{A}\,\exists
b\in\ext{B}:\
\ext{C}(a,b),\label{eq:ExampleTransfQuant2}\end{equation} where
$C\subseteq A\times B$ is a binary continuous relation. Assume
\eqref{eq:ExampleTransfQuant1} and $a\in\ext{A}$. From
\eqref{eq:ExampleTransfQuant1} we have that\[ A\subseteq\left\{
a\in\R\,|\,\exists b\in B:\ C(a,b)\right\} ,\] Therefore, by Theorem
\ref{thm:transfPropositionalLogic} and
Theorem~\ref{thm:preservationQuantifiers}, we have
\[
\ext{A}\subseteq\left\{ a\in\LL\,|\,\exists b\in\ext{B}:\
\ext{C}(a,b)\right\}
\]
and we obtain the existence of a $b\in\ext{B}$ such that
$\ext{C}(a,b)$.  To prove the opposite implication, it suffices to
reverse this deduction.
\end{example}
The ring $\LL$ and the field ${}^{*}\R_{\text{frac}}$ can be used to
reformulate proficiently several parts of the calculus. For example, a
continuous function $f:\R\rightarrow\R$ is differentiable at $x\in\R$
if there exists $m\in\R$ such that for every non zero infinitesimal
$h\in\LL$\[ \exists\sigma\in\LL:\ f(x+h)=f(x)+h\cdot
m+h\cdot\sigma\quad,\quad\sigma\approx0,\] that is if $f(x+h)$ is equal
to the tangent line $y=f(x)+h\cdot m$ up to an infinitesimal of order
greater than $h,$ i.e. of the form $h\cdot\sigma$, with
$\sigma\approx0$. Because $\LL$ is an integral domain, taking a non
zero infinitesimal $h$, we can easily prove that such $m\in\R$ is
unique. Working in ${}^{*}\R_{\text{frac}}$, we have that such $m$ is
given by \[ m=\text{st}\left(\frac{f(x+h)-f(x)}{h}\right).\] Of
course, this real number will be denoted by $f'(x)$, so that we have
that it is infinitely close (or, in Fermat's terminology,
\emph{adequal}; see \cite[p.~28]{We84}) to the corresponding
infinitesimal ratio \[ f'(x)\approx\frac{f(x+h)-f(x)}{h}.\]
Reformulation is only the most trivial possibility offered by a
continuum with infinitesimals, because our geometrical and physical
intuition is now strongly supported by a corresponding rigorous
mathematical formalism.

\section{\label{sec:nilpotentInf}A possible approach with nilpotent infinitesimals}

\label{sub:nilpInf}

There is another approach of refining Cantor equivalence relation on
real Cauchy sequences.  This approach avoiding ultrafilters.  The idea
is to compare two sequences $u$, $v\in\mathcal{C}$ with a basic
infinitesimal, e.g. $\left(\frac{1}{n}\right)_{n}$.  We therefore set
by definition
\begin{equation} u\sim
v\quad\iff\quad\lim_{n\to+\infty}n\cdot\left(u_{n}-v_{n}\right)=0.\label{eq:FermatEquivRel}\end{equation}
In other words, using Landau's little-oh notation, the two Cauchy
sequences are to be equivalent if \[
u_{n}=v_{n}+o\left(\tfrac{1}{n}\right)\text{\;\ for\;}n\to+\infty.\]
As in the previous part of the article, we will denote the equivalence
class of a sequence $u$ simply by $[u]$. The relation defined in
\eqref{eq:FermatEquivRel} is stronger than the usual Cauchy
relation:\[ u\sim
v\quad\Rightarrow\quad\exists\lim_{n\to+\infty}u_{n}=\lim_{n\to+\infty}v_{n}=:\text{st}([u])\in\R.\]

\noindent It is also strictly stronger, because, e.g., the equivalence
class $\left[\left(\frac{1}{n^{p}}\right)_{n}\right]$, with $0<p\le1$,
is a nonzero infinitesimal. For example, the infinitesimal
$\left[\left(\frac{1}{n}\right)_{n}\right]$ is not zero, but we can
think of it as being so small that its square is zero:
$\left[\left(\frac{1}{n^{2}}\right)_{n}\right]=[0]$. With respect to
pointwise operations, we thus obtain a ring rather than a field. A
ring with nilpotent elements may seem unwieldy; however, this was
surely not the case for geometers like S. Lie, E. Cartan,
A. Grothendieck, or for physicists like P.A.M. Dirac or A. Einstein
(see, e.g., references in \cite{Gio09}). The latter used to write
formulas, if $v/c\ll1$, like \[
\frac{1}{\sqrt{1-\frac{v^{2}}{c^{2}}}}=1+\frac{v^{2}}{2c^{2}}\]
containing an equality sign rather than an approximate equality sign.
More generally, in \cite{Ein} A. Einstein wrote\begin{equation}
f(x,t+\tau)=f(x,t)+\tau\cdot\frac{\partial f}{\partial
t}(x,t)\label{eq:Einst}\end{equation} justifying it with the words
{}``\emph{since $\tau$ is very small}''.  Let us note that if we apply
\eqref{eq:Einst} to the function $f(x,t)=t^{2}$ at $t=0$, we obtain
$\tau^{2}=0+\tau\cdot0=0$ and therefore we necessarily obtain that our
ring of scalars contains nilsquare elements. Of course, it is not easy
to state that physicists like A. Einstein or P.A.M.  Dirac were
consciously working with this kind of scalars; indeed, their work,
even if it is sometimes found to be lacking from the
formal/syntactical point of view, it is always strongly supported by a
strong bridge with the physical meaning of the relationships being
discovered.

A difficult point in working with a ring having nilpotent elements is
the concrete management of powers of nilpotent elements, like
\[
h_{1}^{i_{1}}\cdot\ldots\cdot h_{n}^{i_{n}}.
\]
Let us note that this kind of product appears naturally in several
variable Taylor formulae. Is such a product zero or not? Are we able
to decide effectively whether it is zero starting from the properties
of the infinitesimals $h_{j}$ and the exponents $i_{j}$? To be able to
give an affirmative answer to this, and several other questions, we
restrict this construction to a particular subclass of Cauchy
sequences, as follows.
\begin{defn}
\label{def:little-oh_R}
We say that $u$ is a \emph{little-oh polynomial}, and we write
$u\in\R_{o}\left[\frac{1}{n}\right]$ if and only if we can write
\begin{equation}
u_{n}=r+\sum_{i=1}^{k}\alpha_{i}\cdot\frac{1}{n^{a_{i}}}+o\left(\frac{1}{n}\right)\quad\text{as}\quad
n\to+\infty,\label{eq:little-oh}\end{equation} for suitable $k\in\N$,
$r,\alpha_{1},\dots,\alpha_{k}\in\R$, $a_{1},\dots,a_{k}\in\R_{\ge0}$.
\end{defn}
Therefore, $\R_{o}\left[\frac{1}{n}\right]\subset\mathcal{C}$ and our
previous example $\left[\left(\frac{1}{n^{p}}\right)_{n}\right]$ is
generated by a little-oh polynomial. Little-oh polynomials are closed
with respect to pointwise ring operations, and the corresponding
quotient ring\[ \ER:=\R_{o}\left[\frac{1}{n}\right]/\sim\] is called
ring of \emph{Fermat reals}. The name is motivated essentially by two
reasons: in the ring of Fermat reals, it is possible to formalize the
informal method used by A.~Fermat to find maxima and minima, see
\cite{Gio11a}; all the theory of Fermat reals and Fermat extensions
has been constructed trying always to have a strong bridge between
formal properties and informal geometrical interpretation: we think
that this has been one of the leading methods used by A. Fermat in his
work.  For all the proofs of this section, we refer to
\cite{Gio10a,Gio09}.

Exactly as in the previous section about the hyperreals, we have that
the ring of Fermat reals $\ER$ is still Cauchy complete with respect
to the pseudo-metric\[
d\left(x,y\right):=\left|\text{st}(x)-\text{st}(y)\right|\in\R\quad\forall
x,y\in\ER.\] Once again, the ring $\ER$ is not Dedekind complete.

It is not hard to prove that all the numbers $k,r,\alpha_{i},a_{i}$
appearing in \eqref{eq:little-oh} are uniquely determined if we impose
upon them the constraints
\begin{equation} 0<a_{1}\le a_{2}\le\dots\le
a_{k}\le1\label{eq:constExp}\end{equation}
 \begin{equation}
\alpha_{i}\ne0\quad\forall
 i=1,\dots,k.\label{eq:constCoeff}
\end{equation} 
We can therefore introduce the following notation.
\begin{defn}
If $x:=[u]\in\ER$ and $k,r,\alpha_{i},a_{i}$ are the unique real
numbers appearing in \eqref{eq:little-oh} and satisfying \eqref{eq:constExp}
and \eqref{eq:constCoeff}, then we set $\stF{x}:=\st(x):=r$, $\stF{x}_{i}:=\alpha_{i}$,
$\omega(x):=\frac{1}{a_{1}}$, $\omega_{i}(x):=\frac{1}{a_{i}}$,
$N_{x}:=k$. Moreover, we set\[
\diff{t}_{a}:=\left[\left(\sqrt[\leftroot{7}\uproot{2} a]{\frac{1}{n}}\right)_{n}\right]\in\ER\quad\forall a\in\R_{> 0}\]
 and, more simply, $\diff{t}:=\diff{t}_{1}$. Using these notations,
we can write any Fermat real as\begin{equation}
x=\stF{x}+\sum_{i=1}^{N_{x}}\stF{x}_{i}\cdot\diff{t}_{\omega_{i}(x)}\label{eq:decomposition}\end{equation}
 where the equality sign has to be meant in $\ER$. The numbers $\stF{x}_{i}$
are called the \emph{standard parts} of $x$ and the numbers $\omega_{i}(x)$
the \emph{orders} of $x$ (for $i=1$ we will simply use the names
\emph{standard part} and \emph{order} for $\stF{x}$ and $\omega(x)$).
The unique writing \eqref{eq:decomposition} is called the \emph{decomposition
of }$x$. 
\end{defn}
Let us note the following properties of the infinitesimals of the
form $\diff{t}_{a}$:\begin{align*}
\diff{t}_{a}\cdot\diff{t}_{b} & =\diff{t}_{\frac{ab}{a+b}}\\
\left(\diff{t}_{a}\right)^{p} & =\diff{t}_{\frac{a}{p}}\quad\forall p\in\R_{\ge1}\\
\diff{t}_{a} & =0\quad\forall a\in\R_{<1}.\end{align*}

A first justification to the name {}``order'' is given by the following 
\begin{thm}
If $x\in\ER$ and $k\in\N_{>1}$, then $x^{k}=0$ in $\ER$ if and
only if $\stF{x}=0$ and $\omega(x)<k$. 
\end{thm}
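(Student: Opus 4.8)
The plan is to reduce everything to the characterization of the equivalence relation~\eqref{eq:FermatEquivRel}: for a little-oh polynomial $w$ one has $[w]=0$ in $\ER$ if and only if $\lim_{n\to\infty}n\,w_{n}=0$. Writing $x=[u]$ with $u\in\R_{o}\left[\frac{1}{n}\right]$, and using that $\R_{o}\left[\frac{1}{n}\right]$ is closed under products, $x^{k}=\bigl[(u_{n}^{k})_{n}\bigr]$, so ``$x^{k}=0$'' is exactly ``$\lim_{n\to\infty}n\,u_{n}^{k}=0$''. The whole proof is then a matter of reading off the asymptotics of $n\,u_{n}^{k}$ from the normalized expansion~\eqref{eq:little-oh}, \eqref{eq:constExp}, \eqref{eq:constCoeff} of $u$.

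For the direction ``$x^{k}=0\ \Rightarrow\ \stF{x}=0$ and $\omega(x)<k$'', I would first dispose of the case $\stF{x}\neq 0$: since all monomials $n^{-a_{i}}$ in~\eqref{eq:little-oh} have $a_{i}>0$, we get $u_{n}\to\stF{x}$, hence $n|u_{n}|^{k}\to+\infty$ when $\stF{x}\neq 0$, contradicting $\lim n\,u_{n}^{k}=0$; thus $\stF{x}=0$. If moreover $x=0$ there is nothing more to prove (the claim holding with the convention $\omega(0):=0<k$, or $x=0$ understood to be excluded). Otherwise $N_{x}\geq 1$, and collecting the lowest-order monomial gives $u_{n}=\alpha_{1}n^{-a_{1}}\bigl(1+\varepsilon_{n}\bigr)$ with $\varepsilon_{n}\to 0$: each remaining monomial $\alpha_{i}n^{-a_{i}}$ ($i\geq 2$) is $o(n^{-a_{1}})$ because $a_{i}>a_{1}$, and the tail $o(1/n)$ is $o(n^{-a_{1}})$ because $a_{1}\leq 1$. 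Then $n\,u_{n}^{k}=\alpha_{1}^{k}\,n^{1-k a_{1}}\,(1+\varepsilon_{n})^{k}$ with $\alpha_{1}^{k}\neq 0$ and $(1+\varepsilon_{n})^{k}\to 1$, so this limit is $0$ if and only if $1-k a_{1}<0$, i.e. $k>1/a_{1}=\omega(x)$.

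For the converse, assume $\stF{x}=0$ and $\omega(x)<k$. If $x=0$ then $x^{k}=0$ trivially; otherwise $N_{x}\geq 1$ and the same factorization $u_{n}=\alpha_{1}n^{-a_{1}}(1+\varepsilon_{n})$ applies, now with $a_{1}=1/\omega(x)>1/k$, so $1-k a_{1}<0$ and $n\,u_{n}^{k}=\alpha_{1}^{k}n^{1-k a_{1}}(1+\varepsilon_{n})^{k}\to 0$, i.e. $x^{k}=0$.

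The only genuinely delicate point is the factorization $u_{n}=\alpha_{1}n^{-a_{1}}(1+\varepsilon_{n})$ with $\varepsilon_{n}\to 0$; this is precisely where the normalization~\eqref{eq:constExp} (so that $a_{1}$ is the smallest exponent and $a_{1}\leq 1$) and~\eqref{eq:constCoeff} ($\alpha_{1}\neq 0$) are used, the rest being routine asymptotics. An alternative, more in the spirit of $\ER$, is to use the decomposition~\eqref{eq:decomposition} and the identities $\diff{t}_{a}\cdot\diff{t}_{b}=\diff{t}_{\frac{ab}{a+b}}$, $(\diff{t}_{a})^{p}=\diff{t}_{a/p}$, $\diff{t}_{a}=0$ for $a<1$: when $\stF{x}=0$ and $x\neq 0$, expanding $x^{k}$ multinomially shows that the unique term of maximal order is $\alpha_{1}^{k}(\diff{t}_{\omega(x)})^{k}=\alpha_{1}^{k}\,\diff{t}_{\omega(x)/k}$, every other monomial having strictly smaller $\diff{t}$-index, so by uniqueness of the decomposition $x^{k}=0$ if and only if this leading term vanishes, i.e. $\omega(x)/k<1$.
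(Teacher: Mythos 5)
Your proof is correct, and in fact it supplies something the paper does not: for this theorem the text gives no argument at all, deferring all proofs of the section to \cite{Gio10a,Gio09}, so your direct asymptotic verification stands as a self-contained substitute. The reduction is the right one: by closure of $\R_{o}\left[\frac{1}{n}\right]$ under pointwise products, $x^{k}=\left[(u_{n}^{k})_{n}\right]$, and by \eqref{eq:FermatEquivRel} the statement $x^{k}=0$ means exactly $u_{n}^{k}=o(1/n)$; your factorization $u_{n}=\alpha_{1}n^{-a_{1}}(1+\varepsilon_{n})$ then settles both directions at once, since $n\,u_{n}^{k}=\alpha_{1}^{k}\,n^{1-ka_{1}}(1+\varepsilon_{n})^{k}$ tends to $0$ precisely when $ka_{1}>1$, i.e. $\omega(x)<k$, and otherwise tends to $\alpha_{1}^{k}\neq0$ or diverges in absolute value, so $x^{k}\neq0$. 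Two small points deserve to be made explicit. First, your step ``$a_{i}>a_{1}$ for $i\ge2$'' uses strict inequalities, whereas \eqref{eq:constExp} is written with $\le$; strictness (after merging equal exponents) is what the claimed uniqueness of the constrained representation actually forces, and it is also what your alternative $\diff{t}$-argument needs in order to know that no other multinomial term can share the leading order $\omega(x)/k$ and cancel $\alpha_{1}^{k}$. Second, the case $x=0$ genuinely requires a convention, since $\omega$ is only defined through the decomposition when $N_{x}\ge1$; your reading ($\omega(0)=0$, or excluding $x=0$) matches the intended meaning of the statement. Your alternative route via \eqref{eq:decomposition} and the identities $\left(\diff{t}_{a}\right)^{k}=\diff{t}_{a/k}$ and $\diff{t}_{a}=0$ for $a<1$ is equally valid and is closer in spirit to how the paper then treats general products of powers of infinitesimals; the asymptotic computation, on the other hand, needs nothing beyond the definition of $\sim$ and is the more elementary of the two.
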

This motivates also the definition of the following ideal of infinitesimals: 
\begin{defn}
If $a\in\R\cup\{\infty\}$, then\[
D_{a}:=\left\{ x\in\ER\,|\,\stF{x}=0\ ,\ \omega(x)<a+1\right\} .\]

\end{defn}
These ideals are naturally tied with the infinitesimal Taylor formula
(i.e. without any rest because of the use of nilpotent infinitesimal
increments), as one can guess from the property\[
a\in\N\quad\Rightarrow\quad D_{a}=\left\{ x\in\ER\,|\, x^{a+1}=0\right\} .\]
 Products of powers of nilpotent infinitesimals can be effectively
decided using the following result 
\begin{thm}
Let $h_{1},\dots,h_{n}\in D_{\infty}\setminus\{0\}$ and $i_{1},\dots,i_{n}\in\N$,
then 
\begin{enumerate}
\item $h_{1}^{i_{1}}\cdot\ldots\cdot h_{n}^{i_{n}}=0\ \iff\ \sum_{k=1}^{n}\frac{i_{n}}{\omega(h_{n})}>1$ 
\item $h_{1}^{i_{1}}\cdot\ldots\cdot h_{n}^{i_{n}}\ne0\ \Rightarrow\ \frac{1}{\omega\left(h_{1}^{i_{1}}\cdot\ldots\cdot h_{n}^{i_{n}}\right)}=\sum_{k=1}^{n}\frac{i_{n}}{\omega(h_{n})}$. 
\end{enumerate}
\end{thm}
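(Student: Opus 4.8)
Write $s:=\sum_{k=1}^{n}\frac{i_{k}}{\omega(h_{k})}$; the right-hand sides above should of course read $\frac{i_{k}}{\omega(h_{k})}$ rather than $\frac{i_{n}}{\omega(h_{n})}$, and we may assume some $i_{j}\ge1$, the empty product being trivial. The plan is to expand $h_{1}^{i_{1}}\cdots h_{n}^{i_{n}}$ into the basic infinitesimals $\diff{t}_{a}$, to isolate its \emph{unique} term of lowest order, and to read off both statements from that term together with the rule $\diff{t}_{a}=0\iff a<1$. The case $n=1$, $i_{1}=k$ is precisely the preceding theorem on $x^{k}=0$, so this generalizes that proof.

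Since each $h_{j}\in D_{\infty}\setminus\{0\}$ has $\stF{h_{j}}=0$ and $N_{h_{j}}\ge1$, the decomposition~\eqref{eq:decomposition} lets us write $h_{j}=\sum_{l=1}^{N_{h_{j}}}\beta_{j,l}\,\diff{t}_{\omega_{l}(h_{j})}$ with all $\beta_{j,l}\ne0$ and with $\omega(h_{j})=\omega_{1}(h_{j})>\omega_{2}(h_{j})>\cdots$, the strict inequalities being exactly the content of the uniqueness of the decomposition. Now multiply everything out by the multinomial rule, using $\diff{t}_{a}\cdot\diff{t}_{b}=\diff{t}_{ab/(a+b)}$, i.e.\ the fact that $1/\omega$ is \emph{additive} under products of the $\diff{t}$'s: a term of the expansion is produced by choosing, from each of the $i_{j}$ copies of each $h_{j}$, one summand $\diff{t}_{\omega_{l}(h_{j})}$, and its reciprocal order is the sum of the chosen $1/\omega_{l}(h_{j})$. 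Because $1/\omega_{1}(h_{j})<1/\omega_{2}(h_{j})<\cdots$ for each $j$, this sum is minimized uniquely by taking $l=1$ everywhere; the minimum equals $s$ and the coefficient of the corresponding term is $C:=\prod_{j=1}^{n}\beta_{j,1}^{\,i_{j}}\ne0$. Hence
\[
h_{1}^{i_{1}}\cdots h_{n}^{i_{n}}=C\,\diff{t}_{1/s}+\bigl(\text{a finite sum of }\diff{t}\text{'s of reciprocal order strictly }>s\bigr).
\]

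If $s>1$, then $1/s<1$, so $\diff{t}_{1/s}=0$ and, a fortiori, so does every term of larger reciprocal order; thus the product is $0$. If $s\le1$, then $1/s\ge1$, so $\diff{t}_{1/s}\ne0$; since $C\ne0$ and (by the uniqueness above) this leading term is not cancelled, while every remaining term has strictly smaller order, the decomposition of $h_{1}^{i_{1}}\cdots h_{n}^{i_{n}}$ has vanishing standard part and first order $1/s$ with first coefficient $C$. In particular the product is nonzero and $\omega\bigl(h_{1}^{i_{1}}\cdots h_{n}^{i_{n}}\bigr)=1/s$, i.e.\ $\frac{1}{\omega}=s=\sum_{k}\frac{i_{k}}{\omega(h_{k})}$. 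Together, the two cases give~(1) and~(2).

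The one genuinely delicate step is the uniqueness of the minimal-order term: everything rests on its coefficient $\prod_{j}\beta_{j,1}^{\,i_{j}}$ being a product of nonzero reals, hence nonzero, and on no other contribution having the same order so that no cancellation can occur --- which is why strict monotonicity of the orders $\omega_{l}(h_{j})$ (uniqueness of the decomposition) and additivity of $1/\omega$ under $\diff{t}_{a}\cdot\diff{t}_{b}$ are both needed. The borderline case $s=1$, where the product collapses to the order-$1$ infinitesimal $C\,\diff{t}_{1}$ (i.e.\ $C/n$ up to $o(1/n)$) and is in particular nonzero, is worth a separate remark but causes no trouble.
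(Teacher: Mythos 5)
Your argument is correct and complete: expanding each factor via the (unique) decomposition, using the additivity of $1/\omega$ under products of the basic infinitesimals $\diff{t}_{a}$, and isolating the unique term of minimal reciprocal order $s=\sum_{k}i_{k}/\omega(h_{k})$, whose coefficient $\prod_{j}\beta_{j,1}^{i_{j}}\ne0$ cannot be cancelled, settles both claims via $\diff{t}_{a}=0\iff a<1$, including the borderline case $s=1$; you also rightly read the order constraints as strict (as uniqueness of the decomposition forces) and correct the index typo $i_{n}/\omega(h_{n})$ to $i_{k}/\omega(h_{k})$. Note that the paper itself states this theorem without proof, deferring to \cite{Gio10a,Gio09}, so there is no in-text argument to compare against; your leading-term computation is self-contained and is precisely the style of proof used in those references.
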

This result motivates strongly our choice to restrict our construction
to little-oh polynomials only.

The reader can naturally ask what would happen in case of a different
choice of the basic infinitesimal $\left(\frac{1}{n}\right)_{n}$
in the Definition \eqref{eq:FermatEquivRel}. Really, any other choice
of a different infinitesimal $(s_{n})_{n}$ will conduct to an isomorphic
ring through the isomorphism\[
\stF{x}+\sum_{i=1}^{N_{x}}\stF{x}_{i}\cdot\diff{t}_{\omega_{i}(x)}\mapsto\left[\left(\stF{x}+\sum_{i=1}^{N_{x}}\stF{x}_{i}\cdot s_{n}^{\frac{1}{\omega_{i}(x)}}\right)_{n}\right]_{\sim}\]
 This is the only ring isomorphism preserving the basic infinitesimals
$\diff{t}_{a}$ and the standard part function, i.e. such that:\begin{align*}
f\left(\alpha\cdot\diff{t}_{a}\right) & =\alpha\cdot\left[\left(\sqrt[\leftroot{7}\uproot{2} a]{s_{n}}\right)_{n}\right]_{\sim}\\
f(\stF{x}) & =\stF{f(x)}.\end{align*}
 Essentially the same isomorphism applies also to the ring defined
in \cite{Gio10a}, where instead of sequences, the construction is
based on real functions of the form $u:\R_{\ge0}\rightarrow\R$.

\section{Order relation}

It is not hard to define an intuitively meaningful order relation on
the ring of Fermat reals
\begin{defn}
Let $x$, $y\in\ER$ be Fermat reals, then we say that $x\le y$ iff
we can find representatives $[u]=x$ and $[v]=y$ such that \[
\exists N\in\N\,\forall n\ge N:\ u_{n}\le v_{n}.\]

\end{defn}
For all the proofs of this section, see e.g. \cite{Gio11b,Gio09}.

It is not hard to show that this relation is well defined on $\ER$
and that the induced ordered relation is total. This is another strong
motivation for the choice of little-oh polynomials in the construction
of the ring of Fermat reals. The analogous of Theorem \ref{thm:infinitesimalsAndNullSequences}
is the following 
\begin{thm}
Let $h\in\ER$, then the following are equivalent 
\begin{enumerate}
\item $h\in D_{\infty}$, i.e. $\stF{h}=0$, i.e. $h$ is an infinitesimal 
\item $\forall n\in\N_{>0}:\ -\frac{1}{n}<h<\frac{1}{n}$ 
\end{enumerate}
\end{thm}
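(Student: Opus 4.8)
The plan is to reduce the statement to a routine fact about limits of representatives and then check the two implications using the definition of $\le$ on $\ER$. First I would observe that the three reformulations in item~(1) are literally the same assertion: by the definition of $D_a$, and since $\omega(x)$ is always a finite real (at least $1$), the clause $\omega(x)<\infty+1$ in the definition of $D_\infty$ is vacuous, so ``$h\in D_\infty$'' is just ``$\stF h=0$''; and $\stF h=\st(h)$ is, for a Fermat real $h=[u]$ with $u\in\R_o[\frac1n]$, the common value $\lim_n u_n$ already identified earlier in this section. Hence the actual content to prove is the equivalence $\lim_n u_n=0 \iff \bigl(\forall n\in\N_{>0}:\ -\frac1n<h<\frac1n\bigr)$ for an arbitrary representative $u$.

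For the forward implication I would fix $n_0\in\N_{>0}$, use $u_n\to 0$ to get $-\frac1{n_0}\le u_n\le\frac1{n_0}$ for all large $n$, and then compare $[u]=h$ with the constant little-oh polynomials $\pm\frac1{n_0}$: the definition of the order on $\ER$ immediately gives $-\frac1{n_0}\le h\le\frac1{n_0}$. To sharpen these to strict inequalities I would note $h\ne\pm\frac1{n_0}$, because $\st$ restricts to the identity on real constants while $\st(h)=0$; totality of the order on $\ER$ then yields $-\frac1{n_0}<h<\frac1{n_0}$, which is~(2).

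For the converse I would argue by contraposition. Assuming $\st(h)=r\ne0$, say $r>0$ (the case $r<0$ being symmetric), I would pick $n_0\in\N_{>0}$ with $\frac1{n_0}<r$, use $u_n\to r$ to get $u_n\ge\frac1{n_0}$ eventually, and deduce $\frac1{n_0}\le h$ by comparing the constant polynomial $\frac1{n_0}$ with $[u]=h$. Since the order on $\ER$ is total, hence antisymmetric, $\frac1{n_0}\le h$ rules out $h<\frac1{n_0}$, so~(2) fails at $n=n_0$. For $r<0$ one picks $n_0$ with $-\frac1{n_0}>r$ and analogously obtains $h\le-\frac1{n_0}$, contradicting~(2) at $n=n_0$.

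I do not expect a genuine obstacle. In contrast to the hyperreal analogue, Theorem~\ref{thm:infinitesimalsAndNullSequences}, no subsequence extraction is needed here, because the order on $\ER$ is defined through an ``eventually'' quantifier rather than through dominant sets, so the limit of a representative can be read off directly. The only points that require a little care are separating $h$ from the nonzero constants $\pm\frac1{n_0}$, which is immediate from $\st$ being the identity on $\R$, and invoking the well-definedness and totality, hence antisymmetry, of $\le$ on $\ER$ — both of which are asserted earlier in this section.
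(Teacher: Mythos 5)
Your argument is correct. The paper itself does not prove this theorem---all proofs of this section are deferred to \cite{Gio11b,Gio09}---so there is no internal proof to compare against; your direct verification from the definition of $\le$ on $\ER$ (eventual pointwise comparison of a representative with the constants $\pm\frac{1}{n_0}$, separation of $h$ from these nonzero constants via $\st$, and the asserted well-definedness, totality and antisymmetry of the order, reading $x<y$ as $x\le y$ together with $x\ne y$) is exactly the expected argument, and you rightly observe that, unlike Theorem~\ref{thm:infinitesimalsAndNullSequences}, no subsequence extraction is needed because the order is defined by an ``eventually'' condition.
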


\begin{figure}
\includegraphics[height=1.3in]{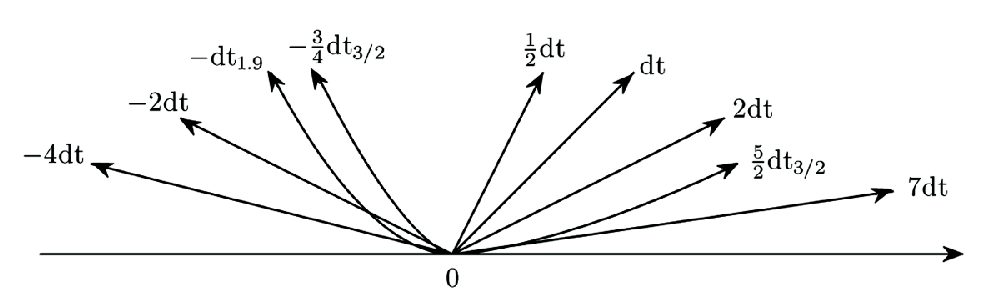} 
\caption{{\small Some first order infinitesimals}}
\label{fig:firstOrderInfinitesimals} 
\end{figure}

The following theorem permits to decide algorithmically the order
relation between two Fermat reals, using only their decompositions 
\begin{thm}
Let $x,$$y\in\ER$. If $\stF{x}\ne\stF{y}$, then\[
x<y\iff\stF{x}<\stF{y}.\]
 Otherwise, if $\stF{x}=\stF{y}$, then 
\begin{enumerate}
\item If $\omega(x)>\omega(y)$, then $x>y$ if and only if $\stF{x}_{1}>0$. 
\item If $\omega(x)=\omega(y)$, then\begin{align*}
\stF{x}_{1}>\stF{y}_{1} & \ \Rightarrow\ x>y\\
\stF{x}_{1}<\stF{y}_{1} & \ \Rightarrow\ x<y.\end{align*}

\end{enumerate}
\end{thm}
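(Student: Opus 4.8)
The plan is to reduce the whole theorem to one elementary fact about the sign of an infinitesimal, together with two trivial remarks: that the order on $\ER$ is translation invariant (given representatives realising $x\le y$, subtract a constant sequence), and that $\st\colon\ER\to\R$ is monotone (a weak inequality between eventually-dominating representatives passes to the limit). The sign fact I would isolate as a lemma: \emph{if $z\in\ER$ is a nonzero infinitesimal with decomposition $z=\sum_{i=1}^{N_z}\stF{z}_i\cdot\diff{t}_{\omega_i(z)}$, then $z>0$ iff $\stF{z}_1>0$, and $z<0$ iff $\stF{z}_1<0$.} To prove it I would take the representative $z=[(z_n)]$ with $z_n=\sum_{i=1}^{k}\stF{z}_i\,n^{-a_i}+o(1/n)$, where $k=N_z\ge 1$, $a_i:=1/\omega_i(z)$ and $0<a_1<\dots<a_k\le 1$; then $n^{a_1}z_n=\stF{z}_1+\sum_{i\ge 2}\stF{z}_i\,n^{-(a_i-a_1)}+n^{a_1}o(1/n)\to\stF{z}_1\ne 0$ (each $a_i-a_1>0$ for $i\ge 2$, and $n^{a_1}o(1/n)\to 0$ because $a_1\le 1$), so $z_n$ has the constant sign of $\stF{z}_1$ for all large $n$; since $n^{a_1}>0$ and $z\ne 0$, this yields $\operatorname{sign}(z)=\operatorname{sign}(\stF{z}_1)$ straight from the definition of $\le$ on $\ER$.

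With these in hand I would argue case by case. If $\stF{x}\ne\stF{y}$: when $\stF{x}<\stF{y}$, representatives converging to $\stF{x}$ and $\stF{y}$ satisfy $u_n<v_n$ for $n$ large, hence $x\le y$, and $x\ne y$ since their standard parts differ, so $x<y$; conversely $x<y$ gives $\stF{x}\le\stF{y}$ by monotonicity of $\st$, hence $\stF{x}<\stF{y}$. If $\stF{x}=\stF{y}=:r$, translation invariance lets me replace $x,y$ by the infinitesimals $x-r,y-r$, which carry the same coefficients $\stF{x}_i,\stF{y}_i$ and orders $\omega_i$; so I may assume $\stF{x}=\stF{y}=0$ and set $w:=y-x$, a nonzero infinitesimal (nonzero because $x\ne y$ in every case the theorem addresses). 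Writing $b_j:=1/\omega_j(y)$, a representative of $w$ is $w_n=\sum_j\stF{y}_j\,n^{-b_j}-\sum_i\stF{x}_i\,n^{-a_i}+o(1/n)$; collecting equal exponents and discarding vanishing coefficients produces the normalised decomposition of $w$, from which I read off $\stF{w}_1$. If $\omega(x)>\omega(y)$, i.e. $a_1<b_1$, the strictly smallest exponent present is $a_1$, coming only from the $x$–part, so $\stF{w}_1=-\stF{x}_1$ and the lemma gives $x>y\iff w<0\iff\stF{x}_1>0$, which is item (1). If $\omega(x)=\omega(y)$, i.e. $a_1=b_1$, the coefficient at that exponent is $\stF{y}_1-\stF{x}_1$; under the hypothesis $\stF{x}_1\ne\stF{y}_1$ it is nonzero and hence equals $\stF{w}_1$, so the lemma gives $x>y\iff w<0\iff\stF{x}_1>\stF{y}_1$ and symmetrically $x<y\iff\stF{x}_1<\stF{y}_1$, which is item (2). (If one of $x,y$ is $0$ the conclusion is immediate from the lemma once the convention for $\omega(0)$ is fixed.)

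The one genuinely delicate point is the bookkeeping in the last step: extracting $\stF{w}_1$ from $w_n=v_n-u_n$ relies on the normalised decomposition \eqref{eq:decomposition} having \emph{strictly} increasing exponents $a_1<\dots<a_k$ (equivalently strictly decreasing orders), which is exactly what uniqueness of the decomposition guarantees, so that after subtracting the two little-oh polynomials no term of minimal order cancels accidentally — except precisely in the situation $\omega(x)=\omega(y)$, $\stF{x}_1=\stF{y}_1$ that the theorem deliberately leaves open for a recursive application. Everything else is the asymptotic arithmetic already packaged into $\R_{o}\!\left[\tfrac{1}{n}\right]$ and into the order relation established earlier in this section.
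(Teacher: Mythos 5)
Your argument is correct, and it is worth noting that the paper itself gives no proof of this theorem --- it defers to the references \cite{Gio11b,Gio09} --- so there is no in-text argument to compare against; judged on its own, your proof is complete and follows the natural ``leading coefficient'' route. The key lemma (a nonzero infinitesimal $z$ has the sign of $\stF{z}_1$, proved by observing that $n^{a_1}z_n\to\stF{z}_1\ne 0$ for any representative, since $a_1\le 1$ kills the $o(1/n)$ tail) is exactly the right reduction, and together with translation invariance and monotonicity of $\st$ it does dispatch all three cases. Three small points deserve care. First, you assume the exponents in the decomposition are \emph{strictly} increasing, $a_1<\dots<a_k$, whereas the paper's constraint \eqref{eq:constExp} is written with $\le$; strictness is indeed what uniqueness forces (equal exponents could be merged), so your reading is the correct one, but it is worth saying explicitly since the cancellation bookkeeping for $w=y-x$ depends on it. Second, when you conclude $z\ge 0$ and $z\ne 0$ hence $z>0$, you are using antisymmetry (and totality) of the order on $\ER$, which the paper asserts without proof in this section; alternatively, your own limit computation applied to an \emph{arbitrary} representative of $z$ (any two differ by $o(1/n)$, so $n^{a_1}$ times the difference still vanishes) yields antisymmetry for free, which would make the lemma self-contained. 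Third, the boundary case where one of $x,y$ is real (so $N=0$ and $\omega$ is undefined) needs the convention you mention; the theorem's hypotheses implicitly exclude it in item (1), and in item (2) the case is handled directly by the lemma applied to $w=y-x$. None of these affects the validity of the argument.
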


\noindent For example, $0<\diff{t}<\diff{t}_{2}<\diff{t}_{3}$, etc.
This motivates why we take $\frac{1}{a}$ in the definition of
$\diff{t}_{a}$: in this way the greater is the order $a$ and the
greater is the infinitesimal.

The ring $\ER$ can also be represented geometrically. 
\begin{defn}
If $x\in\ER$ and $\delta\in\R_{>0}$, then \begin{equation}
\text{\rm graph}_{\delta}(x):=\left\{ (\stF{x}+\sum_{i=1}^{N_{x}}\stF{x_{i}}\cdot t^{1/\omega_{i}(x)},t)\,|\,0\le t<\delta\right\} \label{eq:graph}\end{equation}

\end{defn}

E.g. $\text{graph}_{\delta}(\diff{t}_{2})=\{(\sqrt{t},t)|0\le
t<\delta\}$.  Note that the values of the function are placed in the
abscissa position, so that the correct representation of
$\text{graph}_{\delta}(x)$ is given by the figure
\ref{fig:firstOrderInfinitesimals}.  This inversion of abscissa and
ordinate in the $\text{graph}_{\delta}(x)$ permits to represent this
graph as a line tangent to the classical straight line $\R$ and hence
to have a better graphical picture. Finally, note that if $x\in\R$ is
a standard real, then $N_{x}=0$ and the $\text{graph}_{\delta}(x)$ is
a vertical line passing through $\stF{x}=x$, i.e. they are {}``ticks
on axis''.

The following theorem introduces the geometric representation of the
ring of Fermat reals.

\begin{thm}
\label{thm:representationTheorem}
If $\delta\in\R_{>0}$, then the function \[
x\in\ER\mapsto\text{\emph{graph}}_{\delta}(x)\subset\R^{2}\] is
injective. Moreover if $x$, $y\in\ER$, then we can find
$\delta\in\R_{>0}$ (depending on $x$ and $y$) such that\[ x<y\] if and
only if\begin{equation} \forall p,q,t:\
(p,t)\in\text{\emph{graph}}_{\delta}(x)\text{ ,
}(q,t)\in\text{\emph{graph}}_{\delta}(y)\ \Rightarrow\
p<q\label{eq:orderInTheGeometricalRepresentation}\end{equation} that
is if a point $(p,t)$ on $\text{\emph{graph}}_{\delta}(x)$ comes
before (with respect to the order on the $x$-axis) the corresponding
point $(q,t)$ on $\text{\emph{graph}}_{\delta}(y)$.
\end{thm}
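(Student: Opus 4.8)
The plan is to read $\text{graph}_\delta(x)$ as the graph, over the ordinate axis, of the single-variable function $\phi_x(t):=\stF{x}+\sum_{i=1}^{N_x}\stF{x}_i\,t^{1/\omega_i(x)}$ on $[0,\delta)$, and to exploit the fact that $\phi_x$ is exactly the canonical little-oh representative of $x$ sampled at $t=1/n$: since $\diff{t}_{\omega_i(x)}=\bigl[\bigl((1/n)^{1/\omega_i(x)}\bigr)_n\bigr]$ and $x$ has the decomposition \eqref{eq:decomposition}, one has $x=[(\phi_x(1/n))_n]$ in $\ER$. In this way every assertion about $\text{graph}_\delta(x)$ becomes an assertion about the function $\phi_x$ near $0$, and conversely every such assertion about $\phi_x$ transfers back into $\ER$ by sampling at $1/n$.

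Injectivity is then immediate. For each $c\in[0,\delta)$ the vertical line at ordinate $c$ meets $\text{graph}_\delta(x)$ only in the point $(\phi_x(c),c)$, because the second coordinate recovers $c$. Hence $\text{graph}_\delta(x)=\text{graph}_\delta(y)$ forces $\phi_x(c)=\phi_y(c)$ for all $c\in[0,\delta)$, in particular $\phi_x(1/n)=\phi_y(1/n)$ for every $n>1/\delta$. The canonical representatives of $x$ and $y$ thus agree for all large $n$, so $\lim_{n}n\bigl(\phi_x(1/n)-\phi_y(1/n)\bigr)=0$, i.e. $x=y$ in $\ER$. This argument uses no constraint on $\delta>0$, as claimed.

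For the order part, fix $x,y$ and put $g:=\phi_y-\phi_x$. Collecting equal powers gives $g(s)=c_0+\sum_{j=1}^{m}c_j\,s^{b_j}$ with $0<b_1<\dots<b_m\le 1$; discarding the vanishing coefficients turns this, by uniqueness of the decomposition, into the canonical decomposition of $y-x$ with no little-oh part, so $y-x=[(g(1/n))_n]$ and $\phi_{y-x}=g$. If every coefficient vanishes then $g\equiv 0$, $x=y$, and both sides of the claimed equivalence fail; otherwise let $c_{j_0}$ be the first nonzero coefficient of $g$. Two elementary facts close the argument. First, factoring $s^{b_{j_0}}$ out of $g$ (or, when $j_0=0$, simply letting $s\to 0^+$) yields a $\delta>0$, depending on $x$ and $y$, with $\operatorname{sign}g(s)=\operatorname{sign}c_{j_0}$ for all $s\in(0,\delta)$. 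Second, from $x=[(\phi_x(1/n))_n]$, $y=[(\phi_y(1/n))_n]$ and the definition of $\le$ on $\ER$ — equivalently, from the algorithmic order theorem stated just above applied to the decomposition of $y-x$ — one reads off $x<y\iff c_{j_0}>0$, this sign being that of $\stF{y}-\stF{x}$ when $\stF{x}\ne\stF{y}$ and that of the leading infinitesimal coefficient of $y-x$ otherwise. Combining the two facts, for this $\delta$ we get $x<y$ iff $g>0$ on $(0,\delta)$, i.e. iff every pair of corresponding points $(p,t)\in\text{graph}_\delta(x)$, $(q,t)\in\text{graph}_\delta(y)$ with $t\in(0,\delta)$ satisfies $p<q$ — which is \eqref{eq:orderInTheGeometricalRepresentation}; and when $\stF{x}\ne\stF{y}$ the inequality moreover persists at $t=0$.

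There is no deep obstacle here: the whole proof rests on the two bookkeeping observations above — that $\phi_x(1/\,\cdot\,)$ is the canonical representative of $x$, and that $\phi_{y-x}=\phi_y-\phi_x$ once zero coefficients are dropped — together with the elementary ``lowest-order term dominates near $0$'' estimate. The one point demanding care is the boundary value $t=0$: both $\text{graph}_\delta(x)$ and $\text{graph}_\delta(y)$ contain the point at ordinate $0$, and these coincide precisely when $\stF{x}=\stF{y}$, so \eqref{eq:orderInTheGeometricalRepresentation} is to be read over corresponding points with $t>0$; with that reading the stated equivalence holds for every $x,y\in\ER$.
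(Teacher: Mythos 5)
The paper does not actually prove this theorem: the section containing it only says ``For all the proofs of this section, see e.g.\ \cite{Gio11b,Gio09}'', so there is no internal proof to compare yours against, and your argument has to be judged on its own. Judged so, it is correct and is the natural argument: you identify $\text{graph}_{\delta}(x)$ with the graph, over the $t$-axis, of $\phi_x(t)=\stF{x}+\sum_{i=1}^{N_x}\stF{x}_i\,t^{1/\omega_i(x)}$, note that by \eqref{eq:decomposition} and $\diff{t}_a=[((1/n)^{1/a})_n]$ the sequence $(\phi_x(1/n))_n$ is a representative of $x$, get injectivity by sampling the graphs at $t=1/n$ (equality of the graphs as sets does pin down $\phi_x(t)=\phi_y(t)$ for each $t$, since the second coordinate determines the parameter), and settle the order claim by the sign of the lowest-order nonvanishing coefficient of $g=\phi_y-\phi_x$ near $0$, matched against the definition of $\le$ on $\ER$ through the same representatives (with antisymmetry/totality of the order, asserted in the paper, giving the converse direction). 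The caveat you flag is genuine: as literally written, \eqref{eq:orderInTheGeometricalRepresentation} also quantifies over $t=0$, and when $\stF{x}=\stF{y}$ but $x<y$ (e.g.\ $x=\diff{t}$, $y=2\diff{t}$) both graphs contain the point $(\stF{x},0)$, so the displayed implication fails for every $\delta$ even though $x<y$; the equivalence is correct only if the comparison is restricted to $t>0$ (or weakened to $p\le q$ at $t=0$), and it does hold at $t=0$ as well exactly when $\stF{x}\ne\stF{y}$. That is an imprecision in the theorem's formulation rather than a gap in your proof; with that reading your argument is complete, including the correct observation that injectivity needs no condition on $\delta$ while the order statement genuinely requires $\delta$ to depend on $x$ and $y$.
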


\section{Infinitesimal Taylor formula and computer implementation}

What kind of functions $f:\R\rightarrow\R$ can be extended on $\ER$?
The idea for the definition of extension is natural
$\extF{f}([u]):=[f\circ u]$ so that we have to chose $f$ so that:
\begin{enumerate}
\item If $u$ is a little-oh polynomial, then also $f\circ u$ is a little-oh
polynomial. 
\item If $[u]=[v]$, then also $[f\circ u]=[f\circ v]$. 
\end{enumerate}
The second condition is surely satisfied if we take $f$ locally
Lipschitz, but the first one holds if $f$ is smooth.
\begin{defn}
Let $f:\R^{d}\rightarrow\R$ be a smooth function, then
\[
\extF{f}([u_{1}],\dots,[u_{d}]):=[f\circ u_{1},\dots,f\circ
u_{d}]\quad\forall[u_{1}],\dots,[u_{d}]\in\ER.
\]
\end{defn}
Therefore, the ring of Fermat reals seems potentially useful e.g. for
smooth differential geometry (see e.g. chapter 13 of \cite{Gio09}) or
in some part of physics (see e.g. \cite{Gio10b}), where one can
suppose to deal only with smooth functions.

In several applications, the following infinitesimal Taylor formulae
permit to formalize perfectly the informal results frequently appearing
in physics. 
\begin{thm}
\label{thm:DerivationFormula} Let $x\in\R$ and $f:\R\rightarrow\R$
a smooth function\emph{,} then\begin{equation}
\exists!\, m\in\R\ \forall h\in D_{1}:\ f(x+h)=f(x)+h\cdot m.\label{eq:DerivationFormula}\end{equation}

\noindent In this case we have $m=f^{\prime}(x)$, where $f^{\prime}(x)$
is the usual derivative of $f$ at $x$. 
\end{thm}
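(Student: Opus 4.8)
The plan is to prove existence by exhibiting $m=f'(x)$ and checking the identity \eqref{eq:DerivationFormula} directly on representatives, and to prove uniqueness by pairing the identity with the single infinitesimal $\diff{t}$. The preliminary step is to translate the hypothesis $h\in D_{1}$ into a statement about a representative: writing $h=[u]$ with $u$ a little-oh polynomial and invoking its decomposition, the conditions $\stF{h}=0$ and $\omega(h)<2$ mean that $u_{n}=\sum_{i=1}^{N_{h}}\stF{h}_{i}\,n^{-a_{i}}+o(1/n)$ with $\tfrac12<a_{1}\le\dots\le a_{N_{h}}\le1$ (or simply $u_{n}=o(1/n)$ when $N_{h}=0$). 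Hence $u_{n}=O(n^{-a_{1}})$ with $2a_{1}>1$, which yields the key estimate $u_{n}^{2}=o(1/n)$ as $n\to+\infty$.

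For existence I would apply Taylor's formula to the smooth (hence $C^{2}$) function $f$ near $x$: there are $\delta,M>0$ with $f(x+t)=f(x)+f'(x)\cdot t+R(t)$ and $|R(t)|\le Mt^{2}$ for $|t|\le\delta$. Since $u_{n}\to0$, putting $t=u_{n}$ for $n$ large gives $f(x+u_{n})=f(x)+f'(x)\,u_{n}+R(u_{n})$ with $R(u_{n})=O(u_{n}^{2})=o(1/n)$ by the estimate above. Therefore $(f(x+u_{n}))_{n}$ and $(f(x)+f'(x)\,u_{n})_{n}$ are $\sim$-equivalent little-oh polynomials, so in $\ER$ one has $\extF{f}(x+h)=[(f(x+u_{n}))_{n}]=[(f(x)+f'(x)\,u_{n})_{n}]=f(x)+h\cdot f'(x)$, where the smoothness of $f$ guarantees that $\extF{f}$ is well defined on such arguments. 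This is \eqref{eq:DerivationFormula} with $m=f'(x)$.

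For uniqueness, if $m_{1},m_{2}\in\R$ both satisfy \eqref{eq:DerivationFormula}, subtracting the two identities yields $h\cdot(m_{1}-m_{2})=0$ for every $h\in D_{1}$. Taking $h=\diff{t}=[(1/n)_{n}]$, which lies in $D_{1}$ because $\stF{\diff{t}}=0$ and $\omega(\diff{t})=1<2$, gives $[(\tfrac{m_{1}-m_{2}}{n})_{n}]=0$ in $\ER$, i.e. $\lim_{n}n\cdot\frac{m_{1}-m_{2}}{n}=m_{1}-m_{2}=0$, whence $m_{1}=m_{2}$. (Although $\ER$ is not an integral domain, the infinitesimal $\diff{t}$ annihilates no nonzero real, which is all that is needed here.)

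The single point that really uses the construction is the estimate $R(u_{n})=o(1/n)$: it works precisely because $h\in D_{1}$ forces the leading exponent $a_{1}$ to exceed $1/2$, so the quadratic Taylor remainder decays faster than $1/n$ and is annihilated by the equivalence relation \eqref{eq:FermatEquivRel}. In other words, $D_{1}$ — the nilsquare infinitesimals — is exactly the domain on which the first-order Taylor expansion becomes an equality, and I expect no further serious obstacle in the argument.
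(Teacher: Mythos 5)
Your proposal is correct, and it takes what is essentially the intended route: the paper itself does not print a proof of this theorem (it defers to the references \cite{Gio11a,Gio10b,Gio09}), where the argument is the same as yours — a first-order Taylor expansion whose quadratic remainder is $O(u_n^2)=o(1/n)$ precisely because $h\in D_1$ forces $\omega(h)<2$, i.e. $h^2=0$, together with uniqueness obtained by testing against $h=\diff{t}$. Your explicit reduction of $h\in D_1$ to the exponent condition $a_1>\tfrac12$ on a representative, and your remark that $\diff{t}$ annihilates no nonzero real even though $\ER$ is not an integral domain, are exactly the points that need to be checked.
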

\ 
\begin{thm}
\label{thm:OrdinaryTaylorFor_nVariables}Let $x\in\R^{d}$, $n\in\N_{>0}$
and $f:\R^{d}\rightarrow\R$ a smooth function\emph{,} then\[
\forall h\in D_{n}^{d}:\ f(x+h)=\sum_{\substack{j\in\N^{d}\\
|j|\le n}
}\frac{h^{j}}{j!}\cdot\frac{\partial^{|j|}f}{\partial x^{j}}(x).\]

\end{thm}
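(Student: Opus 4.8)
\textbf{Proof proposal for Theorem~\ref{thm:OrdinaryTaylorFor_nVariables} (multivariable infinitesimal Taylor formula).}

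The plan is to reduce the multivariable statement to the one-variable derivation formula (Theorem~\ref{thm:DerivationFormula}) together with the algebraic rules already established for products of nilpotent infinitesimals. First I would fix $x\in\R^d$, $n\in\N_{>0}$, a smooth $f$, and an increment $h=(h_1,\dots,h_d)\in D_n^d$; writing each $h_j=[u_j]$ with $u_j$ a little-oh polynomial, the classical (finite) Taylor theorem with Peano remainder applied to $t\mapsto f(x+t(u_1)_n,\dots)$ — or more directly to $g(t):=f(x+ \text{(vector argument)})$ — gives, at each index $n$ large enough,
\[
f\big(x+((u_1)_n,\dots,(u_d)_n)\big)=\sum_{|j|\le n}\frac{((u_1)_n)^{j_1}\cdots((u_d)_n)^{j_d}}{j!}\,\frac{\partial^{|j|}f}{\partial x^j}(x)+R_n,
\]
where $R_n$ collects the terms of total degree $\ge n+1$ in the components of the increment and is $o\big((\text{max}_j|(u_j)_n|)^{n+1}\big)$ by smoothness. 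Passing to equivalence classes, the left side is $\extF{f}(x+h)$ by definition of the extension, and I must check that $[R_n]=0$ and that the polynomial terms of degree $\ge n+1$ that I have implicitly absorbed into $R_n$ are indeed zero in $\ER$.

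The crucial input here is the characterization of vanishing products of powers of nilpotent infinitesimals from the theorem preceding Definition~\ref{def:little-oh_R}'s order results (the one stating $h_1^{i_1}\cdots h_n^{i_n}=0\iff\sum_k i_k/\omega(h_k)>1$), combined with the fact that $h_j\in D_n$ means $\stF{h_j}=0$ and $\omega(h_j)<n+1$, i.e. $\omega(h_j)\le$ something forcing $1/\omega(h_j)>1/(n+1)$ only in the wrong direction — so I would instead use the cruder bound $h_j^{n+1}=0$ for $h_j\in D_n$ (this is exactly the displayed consequence $a\in\N\Rightarrow D_a=\{x\in\ER\mid x^{a+1}=0\}$). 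From $h_j^{n+1}=0$ one gets that any monomial $h_1^{j_1}\cdots h_d^{j_d}$ with $|j|\ge n+1$ — wait, that needs more care: $|j|\ge n+1$ does not immediately give some individual $j_\ell\ge n+1$. So the right tool really is the quantitative product rule: for $h_j\in D_\infty\setminus\{0\}$ one has $\omega(h_j)\le n$ (since $\omega(h_j)<n+1$ and orders lie in $\{1/a: 0<a\le 1\}\cup\dots$ — more precisely $\omega(h_j)\le n$ because $D_n=\{x:\stF x=0,\ \omega(x)<n+1\}$ and $\omega$ takes values $\ge 1$), hence $1/\omega(h_j)\ge 1/n$, and for a monomial with $|j|\ge n+1$,
\[
\sum_{k=1}^{d}\frac{j_k}{\omega(h_k)}\ \ge\ \frac{|j|}{n}\ \ge\ \frac{n+1}{n}\ >\ 1,
\]
so the monomial is zero in $\ER$ by part (1) of that theorem (treating any $h_k=0$ factor trivially). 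This disposes simultaneously of the higher-degree polynomial terms and, after bounding $R_n$ by such a monomial times a bounded coefficient, of $[R_n]=0$ as well — here I would note that $R_n$ is dominated in absolute value by a constant times $\sum_{|j|=n+1}|((u_1)_n)^{j_1}\cdots|$, each summand being a null-sequence multiple of a monomial that already represents $0$, so $[R_n]=0$ in $\ER$; this requires that $\R_o[\tfrac1n]$ is closed under the relevant operations and that multiplying a representative of $0$ by a bounded sequence still gives $0$, which follows from the definition of $\sim$ in~\eqref{eq:FermatEquivRel} extended to little-oh polynomials.

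The main obstacle I anticipate is \emph{not} the algebra of vanishing monomials but the bookkeeping that lets one legitimately compute $\extF{f}(x+h)$ by plugging little-oh-polynomial representatives into the finite Taylor expansion: one must verify that $t\mapsto f(x+t\,\text{(stuff)})$ or rather the sequence $n\mapsto f(x+((u_1)_n,\dots,(u_d)_n))$ is itself a little-oh polynomial modulo $\sim$, which is where smoothness (as opposed to mere Lipschitz-ness) of $f$ enters — this is precisely condition (1) in the discussion preceding Definition of $\extF f$, and I would invoke it together with the fact that composing a little-oh polynomial with a smooth function yields a little-oh polynomial (a fact the paper attributes to \cite{Gio10a,Gio09}). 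Once the expansion is known to take place inside $\R_o[\tfrac1n]/\!\sim$, the identification of the surviving terms is exactly the displayed right-hand side, and the uniqueness/consistency with Theorem~\ref{thm:DerivationFormula} in the case $d=1$, $n=1$ is immediate. I would close by remarking that the coefficient identities $\diff t_a\cdot\diff t_b=\diff t_{ab/(a+b)}$ and $(\diff t_a)^p=\diff t_{a/p}$ give the same conclusion by an explicit computation on the basic infinitesimals when $f$ is a polynomial, serving as a sanity check.
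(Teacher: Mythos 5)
The paper itself does not prove this theorem: all proofs for the Fermat-real material are deferred to \cite{Gio10a,Gio09}, so your proposal can only be judged on its own terms. On those terms your overall strategy is the right one — plug little-oh polynomial representatives into the classical order-$n$ Taylor expansion of $f$ at $x$, observe that both sides are little-oh polynomials (smooth composition, as you cite), and show the remainder sequence is $o(1/m)$, hence $\sim 0$ — but one step as written is wrong and another needs to be stated more carefully.

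First, your claim that $h_j\in D_n$ forces $\omega(h_j)\le n$ is false: orders are arbitrary reals $\ge 1$ (they are $1/a_i$ with $0<a_i\le 1$), so $\omega(h_j)<n+1$ does not yield $\omega(h_j)\le n$; for instance $h_j=\diff{t}_{n+\frac12}$ lies in $D_n$ but has order $n+\tfrac12>n$. The conclusion you want survives without that claim: if $|j|\ge n+1$ then
\[
\sum_{k=1}^{d}\frac{j_k}{\omega(h_k)}\;>\;\frac{|j|}{n+1}\;\ge\;1,
\]
the first inequality being strict because every $\omega(h_k)<n+1$ and at least one $j_k>0$, so the monomial vanishes by the product theorem, and the same exponent computation shows each degree-$(n+1)$ remainder term is $O(m^{-s})$ with $s>1$, hence $o(1/m)$. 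Second, for exactly this reason you must use the order-$n$ Taylor formula with the Lagrange or integral remainder, which is $O\bigl(\max_k|(u_k)_m|^{n+1}\bigr)$ (the derivatives at intermediate points stay bounded since $f$ is smooth and $u(m)\to 0$); the Peano form $o\bigl(\max_k|(u_k)_m|^{n}\bigr)$ is \emph{not} sufficient, because $n/\omega(h_k)$ can be $<1$ precisely when $n<\omega(h_k)<n+1$ — and your displayed ``$o(\cdot^{n+1})$'' should accordingly be ``$O(\cdot^{n+1})$''. With these two corrections (and writing the sequence index with a letter other than $n$, which you also use for the Taylor order), your argument is complete; the closing reduction to Theorem \ref{thm:DerivationFormula} and the polynomial sanity check via $\diff{t}_a\cdot\diff{t}_b=\diff{t}_{\frac{ab}{a+b}}$ are harmless but not needed.
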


Note that~$m=f^{\prime}(x)\in\R$ in
Theorem~\ref{thm:DerivationFormula}, i.e. the slope is a standard real
number, and that we can use this formula with standard real numbers
$x$ only, and not with a generic $x\in\ER$, but it is possible to
remove these limitations (see \cite{Gio11a,Gio10b,Gio09}).

The definition of the ring of Fermat reals is highly constructive.
Therefore, using object oriented programming, it is not hard to write
a computer code corresponding to $\ER$. We (see also \cite{Gio11c})
realized a first version of this software using Matlab R2010b.

The constructor of a Fermat real is \texttt{x=FermatReal(s,w,r)},
where \texttt{s} is the $n+1$ double vector of standard parts (\texttt{s(1)}
is the standard part $\stF{x}$) and \texttt{w} is the double vector
of orders (\texttt{w(1)} is the order $\omega(x)$ if $x\in\ER\setminus\R$,
otherwise \texttt{w={[}{]}} is the empty vector). The last input \texttt{r}
is a logical variable and assumes value \texttt{true} if we want that
the display of the number \texttt{x} is realized using the Matlab
\texttt{rats} function for both its standard parts and orders. In
this way, the number will be displayed using continued fraction approximations
and therefore, in several cases, the calculations will be exact. These
inputs are the basic methods of every Fermat real, and can be accessed
using the \texttt{subsref}, and \texttt{subsasgn}, notations \texttt{x.stdParts},
\texttt{x.orders}, \texttt{x.rats}. The function \texttt{w=orders(x)}
gives exactly the double vector \texttt{x.orders} if $x\in\ER\setminus\R$
and \texttt{0} otherwise.

The function \texttt{dt(a)}, where \texttt{a} is a double, construct
the Fermat real $\diff{t}_{a}$. Because we have overloaded all the
algebraic operations, like \texttt{x+y}, \texttt{x{*}y}, \texttt{x-y},
\texttt{-x}, \texttt{x==y}, \texttt{x\textasciitilde{}=y}, \texttt{x<y},
\texttt{x<=y}, \texttt{x\textasciicircum{}y}, we can define a Fermat
real e.g. using an expression of the form \texttt{x=2+3{*}dt(2)-1/3{*}dt(1)},
which corresponds to \texttt{x=FermatReal({[}2 3 -1/3{]},{[}2 1{]},true)}.

We have also realized the function \texttt{y=decomposition(x)}, which
gives the decomposition of the Fermat real \texttt{x}, and the functions \texttt{abs(x)},
\texttt{log(x)}, \texttt{exp(x)}, \texttt{isreal(x)}, \texttt{isinfinitesimal(x)},
\texttt{isinvertible(x)}.

The function \texttt{plot(t,x)} shows the curve \eqref{eq:graph}
at the given input \texttt{t} of double.

The ratio \texttt{x/y} has been implemented if \texttt{y} is invertible.
Finally, the function \texttt{y=ext(f,x)}, corresponds to $\extF{f}(x)$
and has been realized using the evaluation of the symbolic Taylor
formula of the inline function \texttt{f}.

Using these tools, we can easily find, e.g., that\[
\frac{\sin(\diff{t}_{3}+2\diff{t}_{2})}{\cos(-\diff{t}_{4}-4\diff{t})}=\diff{t}_{3}+2\diff{t}_{2}-\frac{1}{2}\diff{t}_{\frac{6}{5}}+\frac{5}{6}\diff{t}.\]
 This corresponds to the following Matlab code:

\noindent \texttt{>\textcompwordmark{}> x=dt(3)+2{*}dt(2)}

\texttt{x = }

\texttt{dt\_3 + 2{*}dt\_2}

\noindent \texttt{>\textcompwordmark{}> y=-dt(4)-4{*}dt(1)}

\texttt{y = }

\texttt{-dt\_4 - 4{*}dt }

\noindent \texttt{>\textcompwordmark{}> g=inline('cos(y)')}

\texttt{g =}

\texttt{Inline function: g(y) = cos(y)}

\texttt{>\textcompwordmark{}> f=inline('sin(x)')}

\texttt{f =}

\texttt{Inline function: f(x) = sin(x)}

\noindent \texttt{>\textcompwordmark{}> decomposition(ext(f,x)/ext(g,y))}

\texttt{ans = }

\texttt{dt\_3 + 2{*}dt\_2 + 1/2{*}dt\_6/5 + 5/6{*}dt}

The Matlab source code is freely available under open-source licence,
and can be requested to the authors of the present article.

\section{Leibniz's law of continuity in $\ER$}

Is a suitable form of the Leibniz's law of continuity provable in the
ring of Fermat reals? The first version is the transfer for equality
and inequality, that can be proved proceeding like in Theorem
\ref{thm:elementaryTransfer}.
\begin{thm}
Let $f$, $g:\R^{d}\rightarrow\R$ be smooth functions, then it results\[
\forall x_{1},\dots,x_{d}\in\R:\ f(x_{1},\dots,x_{d})=g(x_{1},\dots,x_{d})\]
 if and only if\[
\forall x_{1},\dots,x_{d}\in\ER:\ \extF{f}(x_{1},\dots,x_{d})=\extF{g}(x_{1},\dots,x_{d}).\]
 Analogously, we can formulate the transfer of inequalities of the
form $f(x_{1},\dots,x_{d})<g(x_{1},\dots,x_{d})$. 
\end{thm}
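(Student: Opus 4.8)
The plan is to imitate the proof of Theorem~\ref{thm:elementaryTransfer}, splitting each biconditional into its two implications. Throughout, recall that smoothness of $f$ and $g$ is exactly what guarantees (via the construction of the preceding sections) that $\extF f$ and $\extF g$ are defined on all of $\ER^{d}$, with $\extF f\bigl([u_{1}],\dots,[u_{d}]\bigr)=\bigl[\bigl(f(u_{1,n},\dots,u_{d,n})\bigr)_{n\in\N}\bigr]$ and likewise for $g$; recall also that these extensions genuinely restrict to $f$ and $g$ on the copy $\R\subseteq\ER$ of constant sequences, and that the ring structure and order of $\ER$ restrict to the usual ones on $\R$.

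For the equality statement, the forward direction is immediate: if $f=g$ on $\R^{d}$ then $f$ and $g$ are literally the same map, so for all little-oh polynomials $u_{1},\dots,u_{d}$ the sequences $\bigl(f(u_{1,n},\dots,u_{d,n})\bigr)_{n}$ and $\bigl(g(u_{1,n},\dots,u_{d,n})\bigr)_{n}$ coincide term by term, hence represent the same class in $\ER$, so $\extF f=\extF g$ on $\ER^{d}$. For the converse, one specializes $\extF f=\extF g$ to constant sequences $x_{i}=r_{i}\in\R$ and uses that $\extF f,\extF g$ restrict to $f,g$, recovering $f(r_{1},\dots,r_{d})=g(r_{1},\dots,r_{d})$ for all reals $r_{i}$.

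For the inequality $f<g$, the backward direction is again restriction to constant sequences, using that the order of $\ER$ extends the real order. For the forward direction, fix $x_{i}=[u_{i}]\in\ER$ and set $s_{i}:=\st(x_{i})=\lim_{n}u_{i,n}\in\R$. By continuity of $f$ and $g$ we get $\st\bigl(\extF f(x_{1},\dots,x_{d})\bigr)=\lim_{n}f(u_{1,n},\dots,u_{d,n})=f(s_{1},\dots,s_{d})$ and similarly for $g$; by hypothesis $f(s_{1},\dots,s_{d})<g(s_{1},\dots,s_{d})$, so the Fermat reals $\extF f(x_{1},\dots,x_{d})$ and $\extF g(x_{1},\dots,x_{d})$ have distinct standard parts, the first strictly smaller. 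By the order-decision theorem above ($\stF{x}\ne\stF{y}$ implies $x<y\iff\stF{x}<\stF{y}$) this forces $\extF f(x_{1},\dots,x_{d})<\extF g(x_{1},\dots,x_{d})$.

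I expect the only step that is more than a routine unwinding of definitions to be this last one — the forward transfer of a \emph{strict} inequality. In contrast with the hyperreal case, where ``dominant set'' bookkeeping handles $<$ just as directly as $=$, in $\ER$ the termwise inequality $f(u_{n})<g(u_{n})$ only yields $\extF f(x)\le\extF g(x)$, and one genuinely needs the standard-part map — hence continuity of $f$ and $g$ — to rule out that the gap collapses to $0$ (i.e. to a higher-order nilpotent infinitesimal) in $\ER$. Everything else proceeds in the same spirit as Theorem~\ref{thm:elementaryTransfer}.
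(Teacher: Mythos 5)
Your proof is correct. For the equality part you do exactly what the paper intends: it gives no separate argument but says the result ``can be proved proceeding like in Theorem~\ref{thm:elementaryTransfer}'', i.e.\ forward direction by termwise coincidence of the sequences $\bigl(f(u_{1,n},\dots,u_{d,n})\bigr)_n$ and $\bigl(g(u_{1,n},\dots,u_{d,n})\bigr)_n$, converse by restricting to constant sequences — which is precisely your argument. Where you genuinely add something is the strict inequality: the paper's ``analogously'' hides the fact that the hyperreal-style argument does not carry over verbatim, since in $\ER$ a termwise strict inequality $f(u_{1,n},\dots,u_{d,n})<g(u_{1,n},\dots,u_{d,n})$ only yields $\le$ (the gap can be $o(1/n)$ and hence collapse to $0$), whereas in $\LL$ dominance of the set where $<$ holds gives strictness directly. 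Your remedy — pass to standard parts via $\st\bigl(\extF f(x)\bigr)=f(\st x)$ (continuity of $f$) and invoke the order-decision theorem (distinct standard parts decide the order) — is exactly the right fix, and it is the natural route given the tools the paper develops in its section on the order relation; it buys an honest proof of the strict case that the paper only gestures at, at the cost of using the decomposition/order machinery rather than bare definitions.
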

Now, we can proceed as for $\LL$. We firstly define the extension
$\extF{U}$ of a generic subset $U\subseteq\R$. 
\begin{defn}
Define the set of little-oh polynomials $U_{o}\left[\frac{1}{n}\right]$
as in Definition \ref{def:little-oh_R} but taking sequences $u:\N\rightarrow U$
with values in $U$ and such that $\stF{[u]}:=\lim_{n\to+\infty}u_{n}\in U$.
For $u$, $v\in U_{o}\left[\frac{1}{n}\right]$ define $u\sim v$
for $u_{n}=v_{n}+o\left(\frac{1}{n}\right)$ as $n\to+\infty$ and
$\extF{U}:=U_{o}\left[\frac{1}{n}\right]/\sim$. 
\end{defn}
If $i:U\hookrightarrow\R$ is the inclusion map, it is easy to prove
that its Fermat extension $\extF{i}:\extF{U}\rightarrow\ER$ is injective.
We will always identify $\extF{U}$ with $\extF{i}(\extF{U})$, so
we simply write $\extF{U}\subseteq\ER$. According to this identification,
if $U$ is open in $\R$, we can also prove that\begin{equation}
\extF{U}=\{x\in\extF{\R}\,|\,\stF{x}\in U\}.\label{eq:FermatExtOf_U}\end{equation}
 Because of our Theorem \ref{thm:extensionPropsImpliesUltrafilter}
we must expect that our extension operator $\extF{(-)}$ doesn't preserve
all the operators of propositional logic like {}``and'', {}``or''
and {}``not''. To guess what kind of preservation properties hold
for this operator we say that the theory of Fermat reals is strongly
inspired by synthetic differential geometry (SDG; see, e.g., \cite{Kock,Mo-Re,Bel}).
SDG is the most beautiful and powerful theory of nilpotent infinitesimals
with important applycations to differential geometry of both finite
and infinite dimensional spaces. Its models require a certain knowledge
of Topos theory, because a model in classical logic is not possible.
Indeed, the internal logic of its topos models is necessarily intuitionistic.
Fermat reals have several analogies with SDG even if, at the end it
is a completely different theory. For example, in $\ER$ the product
of any two first order infinitesimals is always zero, whereas in SDG
this is not the case. On the other hand, the intuitive interpretation
of Fermat reals is stronger and there is full compatibility with classical
logic.

This background explain why we will show that our extension operator
preserves intuitionistic logical operations. Even if the theory of
Fermat reals can be freely studied in classical logic%
\footnote{More generally, without requiring a background in formal logic.%
}, the {}``most natural logic'' of smooth spaces and smooth functions
remains the intuitionistic one. We simply recall here that the intuitionistic
Topos models of SDG show formally that L.E.J. Brouwer's idea of the
impossibility to define a non smooth functions without using the law
of excluded middle or the axiom of choice is correct.

Because we need to talk of open sets both in $\R$ and in $\ER$ we
have to introduce the following 
\begin{defn}
\label{def:FermatTopology}We always think on $\ER$ the so-called
\emph{Fermat topology}, i.e. the topology generated by subsets of
the form $\extF{U}\subseteq\ER$ for $U$ open in $\R$.\end{defn}
\begin{thm}
\label{thm:transfPropIntuit}Let $A$, $B$ be open sets of $\R$,
then the following preservation properties hold 
\begin{enumerate}
\item \label{enu:1Intuiz}$\extF{(A\cup B)}=\extF{A}\cup\extF{B}$ 
\item \label{enu:2Intuiz}$\extF{(A\cap B)}=\extF{A}\cap\extF{B}$ 
\item \textup{\label{enu:3Intuiz}$\extF{\text{int}(A\setminus B)}=\text{int}(\extF{A}\setminus\extF{B})$} 
\item \label{enu:4Intuiz}$A\subseteq B$ if and only if $\extF{A}\subseteq\extF{B}$ 
\item \label{enu:5Intuiz}$\extF{\emptyset}=\emptyset$ 
\item \label{enu:6Intuiz}$\extF{A}=\extF{B}$ if and only if $A=B$ 
\end{enumerate}
\end{thm}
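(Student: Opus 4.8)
The plan is to reduce the whole theorem to the single identity \eqref{eq:FermatExtOf_U}, which tells us that for an open $U\subseteq\R$ the extension $\extF{U}$ is nothing but the preimage $\st^{-1}(U)$ of $U$ under the standard part map $\st\colon\ER\to\R$, $\st([u])=\lim_n u_n$. First I would record two soft facts: (a) $\st$ is surjective, since it restricts to the identity on the copy of $\R$ sitting inside $\ER$ as constant sequences; and (b) the family $\{\st^{-1}(U)\,|\,U\text{ open in }\R\}$ is already closed under arbitrary unions and finite intersections (preimages commute with these), hence is itself a topology and therefore coincides with the topology it generates, which by Definition~\ref{def:FermatTopology} is the Fermat topology. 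So a subset of $\ER$ is Fermat-open precisely when it is $\st^{-1}$ of an $\R$-open set.

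With this in hand, items (1), (2) and (5) are immediate, since $\st^{-1}$ commutes with unions, intersections and the empty set: $\extF{(A\cup B)}=\st^{-1}(A\cup B)=\st^{-1}(A)\cup\st^{-1}(B)=\extF{A}\cup\extF{B}$, and similarly for the rest. For (4), the forward direction is monotonicity of $\st^{-1}$; for the converse, if $a\in A$ then the constant class of $a$ lies in $\extF{A}\subseteq\extF{B}$, forcing $a=\st(a)\in B$ (equivalently, invoke surjectivity of $\st$). Then (6) follows from (4) by antisymmetry of inclusion.

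The only step needing a genuine argument is (3). Here I would first note $\extF{A}\setminus\extF{B}=\st^{-1}(A)\setminus\st^{-1}(B)=\st^{-1}(A\setminus B)$. The inclusion $\supseteq$ is easy: $\text{int}(A\setminus B)$ is $\R$-open, so $\extF{\text{int}(A\setminus B)}=\st^{-1}(\text{int}(A\setminus B))$ is a Fermat-open subset of $\st^{-1}(A\setminus B)$, hence lies inside its Fermat-interior. For $\subseteq$, take any Fermat-open $V\subseteq\st^{-1}(A\setminus B)$; by fact (b) we have $V=\st^{-1}(U)$ for some $\R$-open $U$, and by surjectivity of $\st$ the inclusion $\st^{-1}(U)\subseteq\st^{-1}(A\setminus B)$ upgrades to $U\subseteq A\setminus B$, whence $U\subseteq\text{int}(A\setminus B)$ and $V=\st^{-1}(U)\subseteq\extF{\text{int}(A\setminus B)}$; taking the union over all such $V$ yields $\text{int}(\extF{A}\setminus\extF{B})\subseteq\extF{\text{int}(A\setminus B)}$. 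The main obstacle, such as it is, is purely a matter of keeping straight the two ambient topologies and the two meanings of ``$\setminus$'' in (3): the difference on the right is the ordinary set-theoretic one, and the content of the statement is exactly that it must be repaired by an interior, matching the intuitionistic ``not''. No structural feature of Fermat reals beyond \eqref{eq:FermatExtOf_U} and surjectivity of $\st$ is needed.
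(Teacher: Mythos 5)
Your proof is correct, and it rests on the same pivot as the paper's argument: the characterization \eqref{eq:FermatExtOf_U}, i.e. $\extF{U}=\st^{-1}(U)$ for $U$ open, together with the fact that $\st$ fixes the copy of $\R$ inside $\ER$; your treatment of items (1), (2), (4), (5), (6) coincides with the paper's. Where you genuinely diverge is item (3): the paper argues pointwise, taking $x$ with $\st(x)\in\text{int}(A\setminus B)$, choosing $\delta>0$ with $(\st(x)-\delta,\st(x)+\delta)\subseteq A\setminus B$, and checking that the basic Fermat-open set $\extF{(\st(x)-\delta,\st(x)+\delta)}$ is a neighbourhood of $x$ contained in $\extF{A}\setminus\extF{B}$, which gives $\extF{\text{int}(A\setminus B)}\subseteq\text{int}(\extF{A}\setminus\extF{B})$ (the reverse inclusion is left essentially implicit there). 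You instead first prove the structural lemma that the family $\left\{\st^{-1}(U)\,|\,U\text{ open in }\R\right\}$ is already a topology, hence equals the Fermat topology of Definition \ref{def:FermatTopology}, so every Fermat-open set is a preimage; combined with surjectivity of $\st$ (so $\st^{-1}(U)\subseteq\st^{-1}(W)$ forces $U\subseteq W$), this yields both inclusions of (3) at the level of open sets rather than points, and in particular makes fully explicit the direction $\text{int}(\extF{A}\setminus\extF{B})\subseteq\extF{\text{int}(A\setminus B)}$ that the paper's written proof glosses over; this is a modest but real gain in completeness. The only blemish is cosmetic: in (3) your labels ``$\subseteq$'' and ``$\supseteq$'' are swapped relative to the equality as stated, but since both inclusions are actually proved, nothing is missing.
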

\begin{proof}
We will use frequently the characterization \eqref{eq:FermatExtOf_U}.
To prove \eqref{enu:1Intuiz} we have that $x\in\extF{(A\cup B)}$
iff $x\in\ER$ and $\stF{x}\in A\cup B$, i.e. iff $\stF{x}\in A$
or $\stF{x}\in B$ and, using again \eqref{eq:FermatExtOf_U}, this
happens iff $x\in\extF{A}$ or $x\in\extF{B}$. Analogously, we can
prove \eqref{enu:2Intuiz}. We firstly prove \eqref{enu:4Intuiz}.
If $A\subseteq B$ and $x\in\extF{A}$, then $\stF{x}\in A$ and hence
also $\stF{x}\in B$ and $x\in\extF{B}$. Viceversa if $\extF{A}\subseteq\extF{B}$
and $a\in A$, then $\stF{a}=a$ so that $a\in\extF{B}$, that is
$\stF{a}=a\in B$. To prove \eqref{enu:3Intuiz} we have that $x\in\extF{\text{int}(A\setminus B)}$
iff $\stF{x}\in\text{int}(A\setminus B)$, i.e. iff $(\stF{x}-\delta,\stF{x}-\delta)\subseteq A\setminus B$
for some $\delta\in\R_{>0}$. From \eqref{enu:4Intuiz} we have $\extF{(\stF{x}-\delta,\stF{x}-\delta)}\subseteq\extF{A}$
and $x\in\extF{(\stF{x}-\delta,\stF{x}-\delta)}$. Finally, a generic
$y\in\extF{(\stF{x}-\delta,\stF{x}-\delta)}$ cannot belong to $\extF{B}$
because, otherwise, $\stF{y}\in(\stF{x}-\delta,\stF{x}-\delta)\cap B$
which is impossible. Therefore, $x$ is internal to $\extF{A}\setminus\extF{B}$
with respect to the Fermat topology. The proofs of \eqref{enu:5Intuiz}
and \eqref{enu:6Intuiz} are direct or follow directly from \eqref{enu:4Intuiz}.\end{proof}
\begin{example}
Using the previous theorem, we can prove the transfer of the analogous
of \eqref{eq:ExampleTransfProp1}, but where we need now to suppose
that $A$, $B$, $C$ are open subsets of $\R$. Therefore, we have\[
\forall x\in\R:\ A(x)\ \Rightarrow\ \left[B(x)\text{ and }\left(C(x)\ \Rightarrow\ D(x)\right)\right]\]
 if and only if\[
\forall x\in\ER:\ \extF{A}(x)\ \Rightarrow\ \left[\extF{B}(x)\text{ and }\left(\extF{C}(x)\ \Rightarrow\ \extF{D}(x)\right)\right].\]

\end{example}
Once again, we don't strictly need a background of intuitionistic
logic to understand that the preservation of quantifier for the Fermat
extension $\extF{(-)}$ must be formulated in the following way 
\begin{thm}
Let $A$, $B$ be open subsets of $\R$, and $C$ be open in $A\times B$.
Let $p:(a,b)\in A\times B\mapsto a\in A$ be the projection on the
first component. Define\begin{align*}
\extF{C} & :=\left\{ (\alpha,\beta)\,|\,(\stF{\alpha},\stF{\beta})\in C\right\} \\
\exists_{p}(C) & :=p(C)\\
\forall_{p}(C) & :=\text{\emph{int}}\left(A\setminus\exists_{p}\left(\text{\emph{int}}\left(\left(A\times B\right)\setminus C\right)\right)\right).\end{align*}
 Then\begin{align*}
\extF{\left[\exists_{p}(C)\right]} & =\exists_{\extF{p}}(\extF{C})\\
\extF{\left[\forall_{p}(C)\right]} & =\forall_{\extF{p}}(\extF{C}).\end{align*}
 That is\begin{align*}
\extF{\left\{ a\in A\,|\,\exists b\in B:\ C(a,b)\right\} } & =\left\{ a\in\extF{A}\,|\,\exists b\in\extF{B}:\ \extF{C}(a,b)\right\} \\
\extF{\left\{ a\in A\,|\,\forall b\in B:\ C(a,b)\right\} } & =\left\{ a\in\extF{A}\,|\,\forall b\in\extF{B}:\ \extF{C}(a,b)\right\} .\end{align*}
 \end{thm}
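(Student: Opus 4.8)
The plan is to regard the existential clause as the real content of the theorem and to deduce the universal clause from it by purely formal manipulation, relying only on material already available: the characterization \eqref{eq:FermatExtOf_U} of $\extF{U}$ for open $U\subseteq\R$, its evident analogue for open subsets of $\R^{2}$ (with $\stF{x}$ read componentwise), the preservation identities of Theorem~\ref{thm:transfPropIntuit} in their one- and two-variable forms, and the classical fact that the coordinate projection $p\colon\R^{2}\to\R$ is an open map. Since $A$, $B$ are open in $\R$, the set $A\times B$ is open in $\R^{2}$, so $C$ and $C':=\text{int}((A\times B)\setminus C)$ are open in $\R^{2}$; hence $\exists_{p}(C)=p(C)$ and $p(C')$ are open subsets of $\R$ contained in $A$, which is what makes $\extF{[\exists_{p}(C)]}$ and $\extF{[\forall_{p}(C)]}$ well defined. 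Throughout, $\extF{p}$ denotes the coordinate projection $\extF{A}\times\extF{B}\to\extF{A}$, which is indeed the Fermat extension of $p$.

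For the existential identity $\extF{[p(C)]}=\{a\in\extF{A}\mid\exists b\in\extF{B}\colon\extF{C}(a,b)\}$ I would argue by double inclusion. If $\alpha\in\extF{A}$, $\beta\in\extF{B}$ and $\extF{C}(\alpha,\beta)$, then $(\stF{\alpha},\stF{\beta})\in C$ by definition of $\extF{C}$, so $\stF{\alpha}=p(\stF{\alpha},\stF{\beta})\in p(C)$, whence $\alpha\in\extF{[p(C)]}$ by \eqref{eq:FermatExtOf_U}; this direction uses no openness. Conversely, let $\alpha\in\extF{[p(C)]}$, so $\stF{\alpha}\in p(C)$ and there is $b_{0}\in B$ with $(\stF{\alpha},b_{0})\in C$. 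Here openness enters only in the mildest way: taking for $\beta$ the \emph{standard} real $b_{0}$, viewed as a constant sequence, we have $\stF{\beta}=b_{0}\in B$, hence $\beta\in\extF{B}$, and $(\stF{\alpha},\stF{\beta})=(\stF{\alpha},b_{0})\in C$, i.e.\ $\extF{C}(\alpha,\beta)$; also $\alpha\in\extF{A}$ since $\stF{\alpha}\in p(C)\subseteq A$. Thus a constant witness always suffices --- in contrast with the ultrafilter setting of Theorem~\ref{thm:preservationQuantifiers}, where the witness must be built index by index --- precisely because only the standard parts are required to land in the open relation $C$. Since $\exists_{\extF{p}}(\extF{C})=\extF{p}(\extF{C})$ is exactly the right-hand set, the first identity of the theorem follows.

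For the universal identity I would simply rewrite, with $C'=\text{int}((A\times B)\setminus C)$ as above:
\[
\extF{[\text{int}(A\setminus p(C'))]}
=\text{int}(\extF{A}\setminus\extF{[p(C')]})
=\text{int}(\extF{A}\setminus\exists_{\extF{p}}(\extF{C'})),
\]
where the first equality is the identity $\extF{\text{int}(A\setminus B)}=\text{int}(\extF{A}\setminus\extF{B})$ of Theorem~\ref{thm:transfPropIntuit} applied to the two open subsets $A$ and $p(C')$ of $\R$, and the second is the existential case just proved, now applied to the open relation $C'$. Since the left-hand side is $\extF{[\forall_{p}(C)]}$, it remains only to identify $\extF{C'}$ with $\text{int}((\extF{A}\times\extF{B})\setminus\extF{C})$, so that the displayed set becomes $\forall_{\extF{p}}(\extF{C})$; this is the two-variable form of the same interior-of-difference identity, which follows from the $\R^{2}$-analogue of \eqref{eq:FermatExtOf_U} together with the description $\text{int}(\ER^{2}\setminus\extF{C})=\{x\mid\stF{x}\notin\overline{C}\}$ of the interior of a complement in the Fermat topology on $\ER^{2}$. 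The asserted set-builder reformulations and the consequence relating $\forall_{p}$ with $\forall_{\extF{p}}$ then need no further work.

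The main obstacle is not any single inclusion above but the two-variable bookkeeping that makes those manipulations legitimate: one must define the Fermat topology on $\ER^{d}$ (generated by the sets $\extF{U}$ with $U$ open in $\R^{d}$), check that on $\ER\times\ER$ it agrees with the product of the Fermat topologies --- which reduces to $\extF{(U_{1}\times U_{2})}=\extF{U_{1}}\times\extF{U_{2}}$ --- and verify that \eqref{eq:FermatExtOf_U} and all of Theorem~\ref{thm:transfPropIntuit} carry over verbatim with $\R$ replaced by $\R^{d}$ and $\stF{x}$ understood componentwise. Once this routine groundwork is in place, both equalities of the theorem drop out of the three steps above, and the same scheme evidently extends, by induction on slots, to $n$-ary relations and nested quantifiers.
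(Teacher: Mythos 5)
Your proposal is correct and follows essentially the same route as the paper's proof: the existential identity is established by double inclusion, using that the projection is an open map so that $p(C)$ is open, the characterization \eqref{eq:FermatExtOf_U}, and a constant (standard) witness $b_0$, and the universal identity is then deduced from the existential one together with the interior-of-difference preservation property \eqref{enu:3Intuiz} of Theorem~\ref{thm:transfPropIntuit}. The only difference is that you spell out the two-variable bookkeeping (the Fermat topology on $\ER\times\ER$ and the $\R^{2}$-analogues of \eqref{eq:FermatExtOf_U} and of the interior-of-difference identity) that the paper's terse reduction leaves implicit.
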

\begin{proof}
The preservation of the universal quantifier follows from that of
the existential quantifier and from property \eqref{enu:3Intuiz}
of \ref{thm:transfPropIntuit}, so that we only have to prove $\extF{\left[p(C)\right]}=\extF{p}(\extF{C})$.
Consider that the projection is an open map, so that $p(C)$ is open
because $C$ is open in $A\times B$. Therefore $x\in\extF{\left[p(C)\right]}$
iff $\stF{x}\in p(C)$, and this holds iff we can find $(a,b)\in C$
such that $\stF{x}=p(a,b)=a\in A$. Therefore, $\extF{p}(x,b)=\left[\left(p(x_{n},b)\right)_{n}\right]=\left[(x_{n})_{n}\right]=x$
and $(x,b)\in\extF{C}$ because $(\stF{x},b)=(a,b)\in C$. This proves
that $\extF{\left[p(C)\right]}\subseteq\extF{p}(\extF{C})$. Vice
versa, if $x\in\extF{p}(\extF{C})$, then we can find $(\alpha,\beta)\in\extF{C}$
such that $x=\extF{p}(\alpha,\beta)=\alpha$. Therefore, $(\stF{\alpha},\stF{\beta})\in C$
and $p(\stF{\alpha},\stF{\beta})=\stF{\alpha}=\stF{x}$. This means
that $\stF{x}\in p(C)$, which is open and hence $x\in\extF{p(C)}$.\end{proof}
\begin{example}
Using the previous theorem, we can prove the transfer of the analogous
of Example \ref{exa:quantifiers}, but where we need now to suppose
that $A$, $B$ are open subsets of $\R$ and $C$ is open in $A\times B$.
Therefore, we have\[
\forall a\in A\,\exists b\in B:\ C(a,b)\]
 if and only if\[
\forall a\in\extF{A}\,\exists b\in\extF{B}:\ \extF{C}(a,b).\]

\end{example}
The theory of Fermat reals can be greatly developed: any smooth manifold
can be extended with similar infinitely closed points and the extension
functor $\extF{(-)}$ has wonderful preservation properties that generalize
what we have just seen on the (intuitionistic) Leibniz's law of continuity
in $\ER$. Potential useful applycations are in the differential geometry
of spaces of functions, like the space of all the smooth functions
between two manifolds.

\section{Conclusion}

We started with the idea of refining the equivalence relation among
real Cauchy sequences so as to obtain a new infinitesimal-enriched
continuum.  We have developed this idea in two directions.  The first
direction takes one toward the hyperreals, and we tried to motivate
the choices one must make to arrive at a powerful theory.  On the
other hand, we saw that the intuitive interpretation of such choices
is sometimes lacking.  The second idea is intuitively clearer but
surely formally less powerful.  The two ideas serve different scopes
because they deal with different kinds of infinitesimals: invertible
and nilpotent.

\end{document}